\documentclass[12pt]{amsart}
\usepackage[top=1in, bottom=1in, left=1in, right=1in]{geometry}
\usepackage{times, amsthm, amssymb, amsmath, amsfonts, amsthm, bm, graphicx, mathrsfs}
\usepackage{latexsym}
\usepackage{amscd}
\usepackage{float}
\usepackage[all]{xy}
\usepackage[usenames,dvipsnames]{color}
\usepackage{tikz}
\usepackage{graphicx}
\usepackage{epstopdf}
\usepackage[colorlinks,
pdftex]{hyperref}

\newtheorem{thm}{Theorem}[section]
\newtheorem{pro}[thm]{Proposition}
\newtheorem{lem}[thm]{Lemma}
\newtheorem{cor}[thm]{Corollary}

\newtheorem{dfn}[thm]{Definition}

\theoremstyle{definition}
\newtheorem{example}[thm]{Example}
\newtheorem{remark}[thm]{Remark}

\newtheorem{convention}[thm]{Convention}

\DeclareMathAlphabet{\mathpzc}{OT1}{pzc}{m}{it}   
\usepackage{calligra}
\DeclareMathAlphabet{\mathcalligra}{T1}{calligra}{m}{n}


\def\<{\langle}
\def\>{\rangle}
\def\0{\mathbf{0}}
\def\CC{{\mathbb C}}
\def\NN{{\mathbb N}}
\def\RR{{\mathbb R}}
\def\TT{{\mathbb T}}
\def\ZZ{{\mathbb Z}}
\def\cA{{\mathcal A}}


\newcommand{\Log}{\operatorname{Log}}

\newcommand{\Arg}{\operatorname{Arg}}

\def\Re{{\rm Re\, }}

\def\bee{\begin{equation*}}
\def\eee{\end{equation*}}

\def\be{\begin{equation}}
\def\ee{\end{equation}}

\def\inter{{\rm int}}

\def\supp{{\rm supp}}
\def\vol{{\rm vol}}
\def\bZ{\mathcal{Z}}
\def\rI{\text{I}}
\def\rII{\text{I\hspace{-1pt}I}}
\def\cD{\mathcal{LA}'}
\def\cM{\mathcal{M}}



\numberwithin{equation}{section}
\parindent0pt

\begin{document}
\vspace*{-3mm}
\mbox{}
\title{Euler--Mellin integrals and $A$-hypergeometric functions}

\author{Christine Berkesch, Jens Forsg{\aa}rd, and Mikael Passare}
\address{Department of Mathematics \\ Stockholm University \\
SE-106 91 Stockholm, Sweden.}
\email{cberkesc@math.su.se, jensf@math.su.se, passare@math.su.se}
\thanks{The first author was supported by NSF Grant OISE 0964985.}
\subjclass[2010]{Primary: 32A15; Secondary: 33C60}

\begin{abstract}
We consider integrals that generalize both the Mellin transforms of rational functions of the form $1/f$ and the classical Euler integrals. The domains of integration of our so-called Euler--Mellin integrals are naturally related to the coamoeba of $f$, and the components of the complement of the closure of the coamoeba give rise to a family of these integrals. After performing an explicit meromorphic continuation of Euler--Mellin integrals, we interpret them as $A$-hypergeometric functions and discuss their linear independence and relation to Mellin--Barnes integrals.
\end{abstract}

\maketitle
\mbox{}
\parskip=1ex
\parindent0pt
\vspace{-6mm}
\section{Introduction}
\label{sec:intro}

In the classical theory of hypergeometric functions, 
a prominent role is played by the Euler integral formula
\[
\medskip
{}_2F_1(s; t; u) = 
\frac{\Gamma(t)}{\Gamma(s_1)\Gamma(s_2)}
\int_0^1 x^{s_1-1}(1-x)^{t-s_1-1}(1-ux)^{-s_2}dx,
\smallskip
\] 
which yields an analytic continuation of the Gauss hypergeometric series 
${}_2F_1$ from the unit disk $|u|<1$ to the larger domain $\big|\!\arg(1-u)\big|<\pi$.
However, this Euler integral is not symmetric in $s_1$ and $s_2$, 
even though the function ${}_2F_1$ enjoys such symmetry.
Following Erd\'elyi \cite{E}, one can introduce another variable of integration 
and obtain the symmetric formula
\medskip
\be\label{eq:sym 2F1}
{}_2F_1(s; t; u) =
G(s,t) \int_0^1\!\!
\int_0^1 \!\!x^{s_1-1}y^{s_2-1}(1-x)^{t-s_1-1}(1-y)^{t-s_2-1}(1-uxy)^{-t}dx\wedge dy,
\ee
where 
\[
G(s,t)= \Gamma(t)^2 \,/\, (\Gamma(s_1)\Gamma(s_2)\Gamma(t-s_1)\Gamma(t-s_2)).
\]
After making the substitutions $z=x/(1-x)$, $w=y/(1-y)$, and $c=1-u$, 
one finds that the double integral in~\eqref{eq:sym 2F1} takes the simple form
\medskip 
\be\label{eq:double int intro}
\int_0^\infty\!\!\!\!
\int_0^\infty \!\frac{z^{s_1}w^{s_2}}{(1+z+w+czw)^t}\frac{dz\wedge dw}{zw},
\medskip
\ee
which restricted to $t=-1$ is a twofold Mellin transform of $1/f$, where $f(z,w)= 1+z+w+czw$. 
In this paper we introduce a generalization of the Mellin transform of a rational function $1/f$, which we call an Euler--Mellin integral. The general form of an Euler--Mellin integral can be found in~\eqref{eq:eulermellin} below.

Euler--Mellin integrals are closely related to $A$-hypergeometric 
Euler type integrals (see~\cite{GKZc,SST}). 
The most notable difference between these previously studied functions 
and the Euler--Mellin integrals we introduce here is the domain of integration. 
We integrate over explicit, simply connected, but non-compact sets, 
whereas previous authors used compact yet rather elusive cycles. 
We show that the simple connectivity of our domain of integration 
allows us to handle the multivaluedness of the integrand; 
however, to achieve convergence, the non-compactness initially restricts the values of  the parameters $(s,t)$, 
see Theorem~\ref{thm:expgro}. 
In Theorem~\ref{thm:mel}, we remove the restrictions on $(s,t)$ through  
an explicit meromorphic continuation of an  Euler--Mellin integral. 
This yields a meromorphic function whose singular locus is contained in certain families of hyperplanes. Taking these into account, we obtain a function which is entire in the parameters $(s,t)$. 

A further generalization of the results of \S\ref{sec:em integrals} is achieved by considering the coamoeba of the polynomial $f$, as defined in~\eqref{eq:def coamoeba}. In practice, we rotate the domain of integration of the Euler--Mellin integral to 
$\Arg^{-1}(\theta) = \{ z\in (\CC_*)^n  \mid \Arg(z) = \theta \}$,
for appropriate choices of $\theta$ (see \S\ref{sec:coamoebas}).
In the previous example, this means replacing the integral \eqref{eq:double int intro} by  
\[ 
\int_{\Arg^{-1}(\theta)} \!\frac{z^{s_1}w^{s_2}}{(c_1+c_2z+c_3w+c_4zw)^t}\frac{dz\wedge dw}{zw}. 
\] 

The $A$-hypergeometric approach considers a polynomial $f$ with general coefficients on a fixed set of monomials, which are identified with a matrix $A$. 
We show in Theorem~\ref{thm:gkz} that for a generic choice of coefficients of $f$, the corresponding Euler--Mellin integral with parameters $(s,t)$ satisfies an $A$-hypergeometric system of differential equations with parameter $\beta = -(t,s)$. 
In particular, the meromorphic continuations of Euler--Mellin integrals obtained through \S\S\ref{sec:em integrals}--\ref{sec:coamoebas} provide a family of $A$-hypergeometric functions that are entire in the parameter $\beta$ (see Definition~\ref{def:gkz}). 

A key open problem in the study of an $A$-hypergeometric system is to explain how its solution space varies with the parameter $\beta$. 
After meromorphic continuation, Euler--Mellin integrals provide a new tool to approach this task. As illustrated in \S\ref{sec:gkz}, the explicit nature of the domain of integration in an Euler--Mellin integral makes it possible to perform computations in examples where the variation of $A$-hypergeometric solutions is not yet fully understood. 
We also show in Proposition~\ref{prop:indep} that under certain conditions on $A$, Euler--Mellin integrals provide a full basis of solutions for generic $\beta$. 
An example in \S\ref{sec:0134} provides  evidence that Euler--Mellin integrals explain the appearance of special solutions at rank-jumping parameters.

Mellin--Barnes integrals are another class of $A$-hypergeometric integrals previously considered in the literature~\cite{N,beukers-monodromy} (see Definition~\ref{def:MB}); in particular, such integrals are used by Beukers to compute elements in the local monodromy group of an $A$-hypergeometric system. In \S\ref{sec:MellinBarnes}, we compare these to Euler--Mellin integrals. A consequence of Theorem~\ref{thm:EM-MB} is that there are at least as many linearly independent Euler--Mellin integrals as Mellin--Barnes integrals; this provides additional examples in which Euler--Mellin integrals are linearly independent at generic $\beta$ (see Corollary~\ref{cor:LopsidedIntegralIndependence}).  

\vspace{-3mm}
\subsection*{Outline}
In \S\ref{sec:em integrals}, we introduce Euler--Mellin integrals, show their convergence, and perform their meromorphic continuation, which is our main result. 
In \S\ref{sec:coamoebas}, we employ coamoebas to extend the results of the previous section to include more general domains of integration. 
Euler--Mellin integrals are shown to be $A$-hypergeometric functions in \S\ref{sec:gkz}, providing a basis of solutions to $A$-hypergeometric systems in certain cases. 
In \S\ref{sec:MellinBarnes}, we relate Euler--Mellin integrals to Mellin--Barnes integrals, obtaining further insight into the linear independence of both sets of integrals. Finally, \S\ref{sec:0134} contains an example of the behavior of Euler--Mellin integrals at a rank-jumping parameter of an $A$-hypergeometric system. 

\vspace{-3mm}
\subsection*{Acknowledgements}
This work was done in connection with the program ``Algebraic geometry with a view toward applications" at Institut Mittag-Leffler, and the authors are thankful to the Institute for its hospitality. 
We are also thankful to Alicia~Dickenstein and Laura~Felicia~Matusevich for helpful conversations. 

\section{Convergence and meromorphic continuation of Euler--Mellin integrals}
\label{sec:em integrals}

This section contains our main result, Theorem~\ref{thm:mel}, which provides an explicit presentation of a meromorphic continuation of the Euler--Mellin integral of a polynomial $f$ of several variables $z_1,z_2,\dots,z_n$. 
Given a polynomial $f = \sum_{\alpha\in\supp(f)} c_\alpha z^\alpha$, 
the \emph{Euler--Mellin integral} is a natural generalization of the Mellin transform of a rational function $1/f$ of several variables and is given by
\be\label{eq:eulermellin} 
M_{f}(s,t)
:=\int_{\RR_+^n} \frac{z^s}{f(z)^t}
	\frac{dz_1\wedge\ldots\wedge dz_n}{z_1\cdots z_n}
=\int_{\RR^n} \frac{e^{\< s,x\>}}{f(e^x)^{t}}\,
	dx_1\wedge\ldots\wedge dx_n,
\smallskip
\ee
where $\RR^n_+:=(0,\infty)^n$ denotes the positive orthant in $\RR^n$. 
Here we employ multi-index notation for variables $z_1,\dots, z_n$ 
and polynomials $f_1, \dots, f_m$; that is, for $s\in\CC^n$ and $t\in\CC^m$, 
we write $z^s := z_1^{s_1}\cdots z_n^{s_n}$ and $f(z)^t := f_1(z)^{t_1}\cdots f_m(z)^{t_m}$. 
Whenever there is no risk of confusion, we use the notation 
$f(z) := f(z)^{(1,\dots,1)} = \prod_{i=1}^m f_i$. 

In order for such an integral to converge, 
restrictions must a priori be placed on both the exponent vector $(s,t)$ 
and the polynomial $f$; it is not
enough to demand only that each $f_i$ is nonvanishing on $\RR^n_+$.
We next provide such a domain of convergence for the 
Euler--Mellin integral~\eqref{eq:eulermellin}, 
generalizing \cite[Thm.~1]{NP}.

\begin{dfn}\label{def:nonvng}
If $\Gamma$ is a face of the Newton polytope $\Delta_f$ of $f$, then the \emph{truncated polynomial with support $\Gamma$} is given by 
\[
f_\Gamma := \sum_{\alpha\in \Gamma\cap \supp(f)} c_\alpha z^\alpha.
\]
The polynomial $f$ is said to be \emph{completely nonvanishing} 
on a set $X$ if for each face $\Gamma$ of $\Delta_f$, 
the truncated polynomial $f_\Gamma$ has no zeros on $X$. 
In particular, the polynomial $f$ itself does not vanish on $X$.
\end{dfn}

For a vector $\tau\in\RR_{+}^m$, we denote by 
$\tau\Delta_f$ the weighted Minkowski sum 
$\sum_{i=1}^m \tau_i \Delta_{f_i}$ of the Newton polytopes of the $f_i$ with respect to $\tau$. 
Note that with this notation, 
the Newton polytope of $f$ satisfies $\Delta_f = (1,\dots,1)\Delta_f$. 

\begin{thm}\label{thm:expgro}
If each of the polynomials $f_1,\dots, f_m$ are completely nonvanishing 
on the positive orthant $\RR^n_+$ (as in Definition~\ref{def:nonvng}), then the Euler--Mellin integral $M_f(s,t)$ of~\eqref{eq:eulermellin} 
converges and defines an analytic function in the tube domain 
\[
\bigl\{(s,t)\in\CC^{n+m}\mid 
	\tau:= \Re t \in\RR_+^m,\ \sigma := \Re s\in\inter(\tau\Delta_f) \bigr\}.
\] 
\end{thm}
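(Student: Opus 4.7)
The plan is to reduce the convergence claim to a real-valued integral estimate, analyze that estimate ray by ray using the support functions of the polytopes $\Delta_{f_i}$, and then upgrade the ray-wise asymptotics to a uniform integrable bound by decomposing $\RR^n$ according to the normal fan of $\tau\Delta_f$. Writing $s=\sigma+i\sigma'$ and $t=\tau+i\tau'$, the complete nonvanishing hypothesis (with $\Gamma=\Delta_{f_i}$) shows $f_i(e^x)\ne 0$ on $\RR^n$, so one may choose a continuous branch of $\arg f_i(e^x)$ on the simply connected space $\RR^n$ and obtain
\[
\Bigl|\frac{e^{\<s,x\>}}{f(e^x)^t}\Bigr|
= e^{\<\sigma,x\>}\prod_{i=1}^m|f_i(e^x)|^{-\tau_i}\,
\exp\!\Bigl(\sum_{i=1}^m\tau_i'\arg f_i(e^x)\Bigr).
\]
Provided each $\arg f_i(e^x)$ is bounded on $\RR^n$---which will fall out of the uniform estimate below---absolute convergence is equivalent to finiteness of the real integral $I(\sigma,\tau):=\int_{\RR^n}e^{\<\sigma,x\>}\prod_i|f_i(e^x)|^{-\tau_i}\,dx$.

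For the ray analysis, fix a unit vector $v\in\RR^n$ and let $h_{v,i}:=\max_{\alpha\in\Delta_{f_i}}\<\alpha,v\>$, with maximizing face $\Gamma_v^{(i)}$. Separating dominant and subdominant terms gives
\[
f_i(e^{rv})=e^{rh_{v,i}}\Bigl(\textstyle\sum_{\alpha\in\Gamma_v^{(i)}}c_\alpha+o(1)\Bigr)\qquad\text{as }r\to\infty,
\]
and the leading coefficient $f_{i,\Gamma_v^{(i)}}(1,\dots,1)$ is nonzero by complete nonvanishing. Hence $|f_i(e^{rv})|\asymp e^{rh_{v,i}}$; by additivity of support functions under Minkowski sums, $h_{\tau\Delta_f}(v)=\sum_i\tau_i h_{\Delta_{f_i}}(v)$, so the integrand decays along the ray at rate $\exp\!\bigl(r(\<\sigma,v\>-h_{\tau\Delta_f}(v))\bigr)$. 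This exponent is strictly negative for every unit $v$ exactly when $\<\sigma,v\><h_{\tau\Delta_f}(v)$ for all $v$, i.e., when $\sigma\in\inter(\tau\Delta_f)$, thereby singling out the tube in the statement.

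The main obstacle is upgrading these pointwise asymptotics to a bound that is uniformly integrable on $\RR^n$, and simultaneously controlling the arguments. I would subdivide $\RR^n$ into the cones of the normal fan of $\tau\Delta_f$. On each full-dimensional cone $C$, corresponding to a tuple of vertices $\alpha^{(i)}\in\Delta_{f_i}$, the ratio $f_i(e^x)/(c_{\alpha^{(i)}}e^{\<\alpha^{(i)},x\>})$ tends to $1$ as $x\to\infty$ inside $C$, which by compactness of the direction sphere provides uniform two-sided bounds on both $|f_i(e^x)|$ and $\arg f_i(e^x)$ on a slight thickening of $C$. On lower-dimensional cones one truncates to the face polynomials $f_{i,\Gamma}$ and applies complete nonvanishing on the affine subspace parallel to $\Gamma$, running an induction on the dimension of the face; this generalizes the single-polynomial argument of \cite[Thm.~1]{NP} to the weighted multi-polynomial setting. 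Summing contributions over the finitely many cones simultaneously yields $I(\sigma,\tau)<\infty$ and the boundedness of each $\arg f_i(e^x)$ needed in the first step. Analyticity on the resulting tube is then automatic: the integrand is jointly holomorphic in $(s,t)$ for fixed $x$, and the above estimates provide locally uniform $L^1$-bounds on compact subsets, so differentiation under the integral sign (or Morera's theorem in each complex variable) gives analyticity of $M_f(s,t)$ throughout the stated domain.
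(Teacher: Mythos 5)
Your proposal is correct in outline and is more explicit than the paper on one point, but it takes a genuinely different route. The paper reduces at once to the single-polynomial case: it decomposes $\sigma = \sigma_1 + \dots + \sigma_m$ with $\sigma_i/\tau_i \in \inter(\Delta_{f_i})$, cites \cite[Thm.~1]{NP} to get $|f_i(e^x)|\,e^{-\<\sigma_i/\tau_i,x\>} \ge c_i e^{k_i|x|}$ for each factor, and multiplies those bounds raised to the powers $\tau_i$. You instead work directly with the weighted Minkowski sum $\tau\Delta_f$, using additivity of support functions $h_{\tau\Delta_f} = \sum_i \tau_i h_{\Delta_{f_i}}$ and a normal-fan decomposition of $\RR^n$ with induction on face dimension; this in effect re-derives the Nilsson--Passare estimate in the weighted multi-factor setting rather than invoking it. Both work: the paper's version is shorter by outsourcing the hard step, yours is self-contained. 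You also flag a subtlety the paper glosses over: since $t$ is complex, $\bigl|f(e^x)^t\bigr|$ carries the extra factor $\exp\bigl(-\sum_i (\Im t_i)\arg f_i(e^x)\bigr)$, so the paper's displayed identity $\bigl|f(e^x)^t\bigr|e^{-\<\sigma,x\>} = \prod_i\bigl(|f_i(e^x)|e^{-\<\sigma_i/\tau_i,x\>}\bigr)^{\tau_i}$ is not literally an equality. It is harmless precisely because $\arg f_i(e^x)$ is bounded under complete nonvanishing, which you note but the paper leaves implicit. Two small imprecisions in your sketch are worth fixing: the uniform convergence $f_i(e^x)/(c_{\alpha^{(i)}}e^{\<\alpha^{(i)},x\>}) \to 1$ holds on closed subcones contained in $\inter(C)$, not on a \emph{thickening} of $C$ (thickening includes directions where $\alpha^{(i)}$ is no longer dominant); the boundary directions are what the face-polynomial induction must absorb. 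And the boundedness of $\arg f_i$ should be established as part of the normal-fan estimates, not deferred as something that ``falls out,'' since you use it at the very first step to pass from $M_f$ to the real integral $I(\sigma,\tau)$; in fact on each cone it comes for free, since $\arg f_i \to \arg(f_{i,\Gamma})$ which is itself bounded by nonvanishing of the truncation. Both are routine to repair.
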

\begin{proof}
It suffices to prove that for any $(s,t)$ with all $\tau_i >0$ and 
$\sigma\in \inter(\tau\Delta_f)$, 
there exist positive constants $c$ and $k$ such that
\[
\big|f(e^x)^t e^{-\<s,x\>}\big| = \big|f(e^x)^{t}\big| e^{-\<\sigma,x\>} \ge c e^{k|x|}
\qquad\text{for all}\quad x\in\RR^n.
\]
In fact, it is enough to show that this inequality holds outside of some ball 
$B(0)$ in $\RR^n$.

Since $\sigma\in\inter(\tau\Delta_f)$, we can expand it as a sum 
$\sigma = \sigma_1 + \dots + \sigma_m$ of $m$ vectors such that 
$\sigma_i/\tau_i \in \inter(\Delta_{f_i})$. 
It is shown in the proof of \cite[Thm.~1]{NP} that for each 
$\sigma_i \in \inter(\Delta_{f_i})$ there are positive constants $c_i$ and $k_i$ 
such that for $x$ outside of some ball $B_i(0)$, 
\[
\big|f_i(e^x)\big| e^{-\<\sigma_i,x\>} \ge c_i e^{k_i|x|}.
\] 
Note that it is essential in \cite[Thm.~1]{NP} 
that $f_i$ is completely nonvanishing on the positive orthant. 
Thus for $x$ outside of $B(0) = \bigcup_{i=0}^m B_i(0)$, we have 
\be\label{eq:bound}
\big|f(e^x)^{t}\big| e^{-\<\sigma,x\>} 
\ = \ 
\prod_{i=1}^m\Big(\big|f_i(e^x)\big| e^{-\left\<\frac{\sigma_i}{\tau_i},x\right\>}\Big)^{\tau_i}
\ \geq \ 
\prod_{i=1}^mc_i^{\tau_i}e^{\tau_i k_i|x|} 
\ = \ 
ce^{k|x|},
\ee
where $c = c_1^{\tau_1}\cdots c_m^{\tau_m}$ and 
$k = \tau_1 k_1+\dots+\tau_m k_m$ are the desired positive constants. 
\end{proof}

\begin{example}\label{ex:2F1classical}
By a classical integral representation of the Gauss hypergeometric function 
${}_2 F_1$, 
\smallskip
\be\label{eq:classical 2F1}
\int_0^\infty
\frac{z^s}{(1 + z)^{t_1}(c+z)^{t_2}} \frac{dz}{z}
\ =\ 
\frac{\Gamma(t_1+t_2-s) \Gamma(s)}{\Gamma(t_1+t_2)}
\ {}_2 F_1(t_2,t_1+t_2-s;t_1+t_2;1-c)
\smallskip
\ee
for $\Re (t_1+t_2) > \Re (t_1+t_2-s) > 0$ and $|\arg(c)| < \pi$, where $\arg$ denotes the principal branch of the argument mapping. 
Note that $|\arg(c)| < \pi$ is equivalent to $f(z) = (1 + z)(c+z)$ 
being completely nonvanishing on $\RR_+$. 
Since $\Delta_{f_1} = \Delta_{f_2} = [0,1]$, the condition that 
$\sigma \in \inter(\tau \Delta_f)$ is the same as $0<\Re(s)<\Re(t_1+t_2)$. 
We also note that the right hand side of~\eqref{eq:classical 2F1} 
is analytic in this domain. Further, since $\Re(t_1) > 0$ and $\Re(t_2)>0$, 
the convergence domain given in Theorem \ref{thm:expgro} is not optimal; 
however, being full-dimensional, 
it is large enough for our goal of meromorphic continuation.
\end{example}

As the right hand side of \eqref{eq:classical 2F1} is a meromorphic function 
in $s$ and $t$, it provides a meromorphic extension of the corresponding Euler--Mellin integral. 
On this right side, we have the regularized ${}_2F_1$ as one factor, thus the 
polar locus of the meromorphic extension is contained in two families of hyperplanes given 
by the polar loci of the Gamma functions. 
Our main result shows that this kind of meromorphic 
continuation is possible for all Euler--Mellin integrals. 

To obtain the strongest form of this result, 
we choose a specific presentation for $\tau\Delta_f$. 
To begin, each Newton polytope $\Delta_{f_i}$ can be written uniquely 
as the intersection of a finite number of halfspaces
\be\label{eq:newton fi}
\Delta_{f_i}=\bigcap_{j=1}^{N_i}
\bigl\{\sigma\in\RR^n \mid \<\mu^i_j,\sigma\>\geq\nu^i_j\bigr\}, 
\ee
where the $\mu^i_j$ are primitive vectors. 
Fixing an order, let $\{\mu_1,\dots,\mu_N\}$ be equal to the set 
$\{\mu^i_j\mid 1\le i\le m, \le j\le N_i\}$, where we assume that $\mu_i
\neq\mu_j$ for all $i\neq j$. 
We now extend the definitions of $\nu_j^i$ from~\eqref{eq:newton fi} to each $\mu_k$; 
namely, for each $k$, let $\nu_k := (\nu_k^1,\dots,\nu_k^m)$ with 
\[
\nu^i_k := \min\{ \<\mu_k,\alpha\> \mid \alpha\in\Delta_{f_i}\}, 
\] 
and set $|\nu_k| := \nu_k^1 + \dots + \nu_k^m$. 
It now follows from the definition of the $\nu_k$ that
\be
\label{eq:newtonpol2}
\tau\Delta_f = \bigcap_{k=1}^N
	\bigl\{\sigma\in\RR^n \mid \<\mu_k,\sigma\>\geq\<\nu_k,\tau\>\bigr\} 
\ee
and 
$\inter (\tau\Delta_f) = \sum_{i=1}^m \tau_i \,\inter(\Delta_{f_i})$. 

We are now prepared to state our main result, which provides a meromorphic continuation of~\eqref{eq:eulermellin}, generalizing \cite[Thm.~2]{NP}. 
In \S\ref{sec:coamoebas}, we obtain a stronger form of 
the result by relaxing the condition that the $f_i$ be completely nonvanishing 
on $\RR^n_+$.

\begin{thm}\label{thm:mel}
If the polynomials $f_1,\dots, f_m$ are completely nonvanishing on 
the positive orthant $\RR^n_+$ (as in Definition~\ref{def:nonvng}) and 
the Newton polytope $\Delta_{f_1\cdots f_m}$ is of full dimension $n$,
then the Euler--Mellin integral $M_{f}(s,t)$ 
admits a meromorphic continuation of the form
\begin{equation}\label{eq:mero}
M_{f}(s,t)=\Phi_f(s,t)\prod_{k=1}^N \Gamma(\<\mu_k,s\>-\<\nu_k,t\>),
\end{equation}
where $\Phi_f(s,t)$ is an entire function and $\mu_k,\nu_k$ 
are given by~\eqref{eq:newtonpol2}.
\end{thm}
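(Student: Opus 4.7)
The plan is to adapt the method of Nilsson--Passare \cite[Thm.~2]{NP} from the case of a single polynomial to the product setting, extracting one Gamma factor per facet normal of the Newton polytope $\tau\Delta_f$. After the substitution $z = e^x$, the bound established in the proof of Theorem~\ref{thm:expgro} shows that the integrand $e^{\<s,x\>}/f(e^x)^t$ decays exponentially as $|x|\to\infty$ in every direction, and that the decay in the direction $-\mu_k$ degenerates exactly as $\sigma$ approaches the facet hyperplane $\<\mu_k,\sigma\>=\<\nu_k,\tau\>$ of $\tau\Delta_f$. Any pole of $M_f(s,t)$ must arise from such a directional failure, and the goal is to show that each such failure contributes the single factor $\Gamma(\<\mu_k,s\>-\<\nu_k,t\>)$.

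To implement this, I would first introduce a smooth partition of unity $\{\chi_k\}_{k=1}^N$ on $\RR^n$ subordinate to the normal fan of $\tau\Delta_f$, with $\chi_k$ supported in a conical neighborhood of the ray through $-\mu_k$ and radially homogeneous of degree $0$ outside a ball. Using a $GL_n(\ZZ)$-change of coordinates sending $\mu_k$ to the first basis vector, introduce $y=\<\mu_k,x\>$ and transverse coordinates $x'$. Because each $f_i$ is completely nonvanishing, $\log f_i(e^x)$ is smooth and, on $\supp\chi_k$, its leading asymptotic as $y\to-\infty$ is $\nu_k^i\,y$ with exponentially small corrections. Consequently the localized integrand factors as
\[
\chi_k(x)\cdot e^{(\<\mu_k,s\>-\<\nu_k,t\>)\,y}\cdot H_k(s,t,x',y),
\]
where $H_k$ is smooth, Schwartz-class in $x'$ on the support of $\chi_k$, and exponentially flat in $|y|$ relative to the leading exponential.

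Iterated integration by parts in $y$ then extracts the desired Gamma factor. Each step converts a derivative of $e^{ay}$ (with $a=\<\mu_k,s\>-\<\nu_k,t\>$) into a factor $1/(a+j)$ for $j=0,1,\dots$, and after $K$ steps the accumulated product
\[
\prod_{j=0}^{K-1}\frac{1}{\<\mu_k,s\>-\<\nu_k,t\>+j} \;=\; \frac{\Gamma(\<\mu_k,s\>-\<\nu_k,t\>)}{\Gamma(\<\mu_k,s\>-\<\nu_k,t\>+K)}
\]
appears as a prefactor. Boundary terms at $y\to-\infty$ vanish by the decay of $H_k$, while the boundary terms coming from the edge of $\supp\chi_k$ are compactly supported, hence contribute entire functions in $(s,t)$. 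The remainder integral converges on a tube domain that extends $K$ units further in the $\mu_k$-direction than the original domain, so letting $K\to\infty$ shows that $M_f(s,t)/\Gamma(\<\mu_k,s\>-\<\nu_k,t\>)$ is holomorphic across the polar hyperplanes $\<\mu_k,s\>-\<\nu_k,t\>\in -\NN$. Performing this extraction successively for all $k=1,\dots,N$ produces the factored form \eqref{eq:mero}.

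The main obstacle is verifying that the prefactor $\Phi_f(s,t)$ is truly \emph{entire}, rather than merely meromorphic with additional poles. Here the full-dimensionality of $\Delta_{f_1\cdots f_m}$ is essential: it guarantees that the rays $-\mu_k$ together cover all of $\RR^n$, so no asymptotic direction escapes partition-of-unity control, and it precludes additional Gamma factors arising from lower-dimensional faces. The radial homogeneity of each $\chi_k$ outside a ball ensures that the commutators between the localization and the integration-by-parts operators are compactly supported, hence contribute entire terms; and the complete nonvanishing of each $f_i$ is exactly what makes the truncations $f_{i,\Gamma_k^{(i)}}$ nonvanishing on the transverse orthant, so that the transverse integrals converge after each integration-by-parts step. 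Combining these reductions shows that $\Phi_f(s,t)$, defined by the quotient in \eqref{eq:mero}, extends to an entire function on $\CC^{n+m}$.
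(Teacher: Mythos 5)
Your approach is genuinely different from the paper's. The paper exploits a global scaling identity: writing $M_f(s,t) = \lambda^{\<\mu_k,s\>-\<\nu_k,t\>}\int z^s\bigl(\lambda^{-\<\nu_k,t\>}f(\lambda^{\mu_k}z)^t\bigr)^{-1}\frac{dz}{z}$, differentiating in $\lambda$ at $\lambda=1$ yields an exact identity
\[
M_f(s,t)=\frac{1}{\<\mu_k,s\>-\<\nu_k,t\>}\int_{\RR_+^n}\frac{z^s g_k(z)}{f(z)^{t+1}}\frac{dz}{z},
\]
with $\supp(g_k)$ disjoint from the facet $\Gamma_k$, so the right side converges on a strictly larger tube. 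Iterating and keeping track of the accumulating rational prefactors $L_\beta(s,t)$, the paper shows the denominators never repeat, hence all poles are simple and match those of $\prod_k\Gamma(\<\mu_k,s\>-\<\nu_k,t\>)$, forcing $\Phi_f$ to be entire. Your plan is instead local and analytic, cutting off to conical neighborhoods of the facet directions and extracting Gamma factors by a directional asymptotic argument. That route is conceivably salvageable, but as written it has a concrete error and several gaps.

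The central error is the integration-by-parts computation. Integrating by parts in $y$ against $e^{ay}$ produces $1/a$ at \emph{every} step: the boundary-term sequence is $\phi(0)/a - \phi'(0)/a^2 + \phi''(0)/a^3 - \cdots$, which accumulates high-order poles at $a=0$, not simple poles at $a=0,-1,-2,\dots$, and in particular does not assemble into $\Gamma(a)$. To get the claimed factors $1/(a+j)$ you must either substitute $u=e^y$ and integrate by parts in $u$ (so that $\int_0^1 u^{a-1}\wt{\phi}\,du$ picks up $u^{a+j}/(a+j)$ at each step), or — equivalently, and closer to the structure actually present here — expand $H_k$ in an asymptotic series in powers of $e^y$ as $y\to-\infty$ and integrate each $e^{(a+j)y}$ term. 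The latter is the correct picture, because $f(e^x)$ is a finite exponential sum, so $H_k$ genuinely does have such an expansion; but you must establish it and control the remainder. The paper's algebraic identity sidesteps all of this by shifting the exponent $t\mapsto t+1$ and the support $\supp(f)\mapsto\supp(g_k)$ simultaneously, which is exactly the discrete analogue of ``peel off one power of $e^y$.''

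Beyond this, three smaller gaps remain. First, the boundary terms from the cutoff are not ``compactly supported'': $\nabla\chi_k$ is supported on a conical shell extending to infinity, so entirety of those contributions must come from improved exponential decay away from the facet normals, not from compactness. Second, the conical neighborhoods of the rays $-\mu_k$ cannot cover $\RR^n\setminus\{0\}$ without crossing the normal cones of vertices of $\tau\Delta_f$; you need an extra cutoff for those ``safe'' directions, or else you must verify that your factorization of the integrand is still controlled when $\chi_k$ is wide enough to reach a vertex cone (where the vertex monomial, not the facet polynomial, dominates). Third, it is not addressed how successive extractions interact: after dividing out $\Gamma(\<\mu_1,s\>-\<\nu_1,t\>)$, the resulting function is no longer presented as a single Euler--Mellin integral, and you would need to argue that the extraction for $\mu_2,\dots,\mu_N$ can still be carried through. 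The paper's proof resolves this by keeping the object in the same recursive form~\eqref{eq:induction} at every step.
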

\begin{proof}
By Theorem~\ref{thm:expgro}, the original Euler--Mellin integral $M_f(s,t)$ of~\eqref{eq:eulermellin} 
converges on 
\[
\left\{ (s,t)\in\CC^{n+m} \,\big|\,
\tau:=\Re(t)\in\RR_{+}^m,\ 
\sigma:=\Re(s) \text{ such that } \<\mu_k,\sigma\>>\<\nu_k,\tau\> 
\text{ for all } 1\leq k \leq N
\right\}, 
\]
which is a domain since $\Delta_f$ is of full dimension. 
Our goal is to expand the convergence domain of 
the integral~\eqref{eq:eulermellin}, 
at the cost of multiplication by terms corresponding to 
the poles of the Gamma functions appearing in~\eqref{eq:mero}. 
We do this iteratively, integrating by parts in the direction of 
a vector $\mu_k$ at each step. This expands the domain of 
convergence in the opposite direction of $\mu_k$ by a distance $d_k$, 
which we determine explicitly. 

To begin, we set notation for the first iteration in one direction. 
Fix $k$ between $1$ and $N$, and let $\Gamma$ be the face of $\Delta_{f_i}$ 
corresponding to $\mu_k$ and $\nu_k$. 
For $\alpha\in\supp(f)$, set 
\[
d_k^\alpha := \<\mu_k, \alpha\> - |\nu_k|. 
\]
Since $\alpha\in\Delta_f$, it follows that $d_k^\alpha \geq 0$. 
In particular, since there is a decomposition 
$\alpha = \sum_i \alpha_i$ with $\alpha_i \in \Delta_{f_i}$, we see that 
$d_k^\alpha = 0$ if and only if $\<\mu_k, \alpha_i\> = \nu_k^i$ for all $i$.

Fixing $i$, the truncated polynomial $(f_i)_\Gamma$ has the homogeneity 
$(f_i)_\Gamma(\lambda^{\mu_k}z )=\lambda^{\nu^i_k}(f_i)_\Gamma(z)$, where $\lambda$ is any nonzero complex number and $\lambda^{\mu_k}z = (\lambda^{\mu_k^1}z_1,\lambda^{\mu_k^2}z_2,\dots,\lambda^{\mu_k^n}z_n)$. 
Hence the coefficients of the scaled polynomial $\lambda^{-\nu^i_k}{(f_i)_\Gamma}(\lambda^{\mu_k}z)$ are independent of $k$ and the $\lambda$.  
In particular, we have that the Newton polytope of 
\[
f'_i(z):= \frac{d}{d\lambda}
\Bigl(\lambda^{-\nu^i_k}{f_i}(\lambda^{\mu_k}z)\Bigr)\bigg|_{\lambda=1}
\]
is disjoint from $\Gamma$.
This fact allows us to extend the domain of convergence 
of~\eqref{eq:eulermellin} over the hyperplane defined by 
$\<\mu_k,\sigma\>=\<\nu_k,\tau\>$ as follows. 
Since $M_{f}(s,t)$ is independent of $\lambda$, we have 
\[
0
=\frac{d}{d\lambda}\int_{\RR_+^n}
\frac{(\lambda^{\mu_k}z)^{s}}{{f(\lambda^{\mu_k}z)^{t}}}\frac{dz}{z}
=\frac{d}{d\lambda}\left[\lambda^{\<{\mu_k},s\>-\<\nu_k,t\>}
\int_{\RR_+^n}
\frac{z^{s}}{{\lambda^{-\<\nu_k,t\>}f(\lambda^{\mu_k}z)^{t}}}
\frac{dz}{z}\right].
\]
Thus differentiating~\eqref{eq:eulermellin} with respect to $\lambda$ 
and setting $\lambda=1$ yields the identity
\begin{equation}\label{eq:ett}
M_{f}(s,t) = \frac{1}{\<\mu_k,s\>-\<\nu_k,t\>}
\int_{\RR_+^n}\frac{z^{s} g_{k}(z)}{f(z)^{t+1}}\frac{dz}{z},
\end{equation}
where $g_k$ is the polynomial
\[
g_k = \sum_{i=1}^m t_i \cdot f_1\cdots {f'_i}\cdots f_m.
\]
Note that $\supp(g_k)$ is contained in $\supp(f)$; 
moreover, since $\Gamma$ is the face of $\Delta_f$ 
corresponding to $\mu_k$ and 
$\supp(f'_i)$ is disjoint from $\Delta_{f_i}\cap \Gamma$, 
we see that $\supp(g_k)$ is disjoint from $\Gamma$. 
In other words, $d_k^\alpha > 0$ 
for each $\alpha\in\supp(g_k)$. 

Rewriting~\eqref{eq:ett} as the sum
\be\label{eq:int rewrite}
M_{f}(s,t)
=\sum_{\alpha\in \supp(g_k)}\frac{h_\alpha(t) }{\<\mu_k,s\>-\<\nu_k,t\>}\int_{\RR_+^n}\frac{z^{s+\alpha}}{f(z)^{t+1}}\,\frac{dz}{z},
\ee
for some linear polynomials $h_\alpha(t)$, 
noting that each term of \eqref{eq:int rewrite} is a translation of the original Euler--Mellin integral. 
By Theorem~\ref{thm:expgro}, the term corresponding to $\alpha$ 
converges on the domain given by 
$\tau+1>0$ and 
\[
\<\mu_j,\sigma+\alpha\>>\<\nu_j,\tau+1\>,\quad \text{for } j=1,\dots, N, 
\]
where the latter is equivalent to
\[
\< \mu_j,\sigma\>>\<\nu_j,\tau+1\> - \< \mu_j,\alpha\> 
= \<\nu_j,\tau\>-d_j^\alpha, \quad \text{for } j=1,\dots,N.
\]
The sum~\eqref{eq:int rewrite} converges on the 
intersection of these domains, which is given by 
\begin{align*}
\tau +1 &> 0,  \\
   \< \mu_j,\sigma\> &> \<\nu_j,\tau\>  \quad \text{if } j \neq k,  \text{ and}\\
   \< \mu_k,\sigma\> &> \<\nu_k,\tau\> - d_k, \end{align*}
where $d_k := \min \{ d_k^\alpha \mid \alpha\in\supp(g_k)\}$.
Since $d_k$ is by definition strictly greater than 0,~\eqref{eq:int rewrite} 
has a strictly larger domain of convergence than~\eqref{eq:eulermellin}; 
we say that it has been extended by the ``distance" $d_k$ 
in the direction determined by $\mu_k$. 

Before iterating this procedure, we set some notation. 
Let $G_k$ be the semigroup generated by the integers 
$\{d_k^\alpha\}\subseteq \NN$. 
Let $\beta = (\alpha_1,\dots,\alpha_q)$ be an ordered $q$-tuple with 
$\alpha_i \in \supp(f)$ for each $i$. 
We sometimes write $\beta$ as an exponent of $z$, viewing $\beta = \alpha_1 + \dots + \alpha_q$. 
Similarly, set $d_k^\beta := d_k^{\alpha_1} + \dots + d_k^{\alpha_q} \in G_k$.

Now after $q$ iterations, let $\mu_{j(i)}$ denote the direction of the extension 
in the $i$th iteration. Let 
$d_{j(i)}^{\beta_i} := d_{j(i)}^{\alpha_1} + \dots + d_{j(i)}^{\alpha_{i-1}}\in G_{j(i)}$ 
be the sum of the distances of the first $i-1$ components of $\beta$ in 
the direction $\mu_{j(i)}$. 
Then there is a rational function of the type
\be\label{eq:first hs}
L_{\beta}(s,t) = \prod_{i=1}^q\frac{h_{\beta_i}(t)}{\<\mu_{j(i)},s\>-\<\nu_{j(i)},t\> + d_{j(i)}^{\beta_i}},  
\ee
where $h_\beta (t) := (h_{\beta_1}(t), \dots, h_{\beta_q}(t))$ 
is an ordered $q$-tuple of linear polynomials 
such that $M_f$ can be expressed 
as a finite sum of translations of the original Euler--Mellin integral: 
\be\label{eq:induction}
M_f(s,t) = \sum_{\beta} L_{\beta}(s,t) \int_{\RR_+^n}\frac{z^{s+\beta}}{f(z)^{t+q}}\,\frac{dz}{z}.
\ee
Fixing $k$, we next expand the domain of convergence of~\eqref{eq:induction} 
in the direction determined by $\mu_k$. 
This is achieved through simultaneous expansion of the 
domains of convergence of all terms, arguing as above. 
This yields the expression
\begin{align}\label{eq:result int}
M_f(s,t) 
&= \sum_{\beta} L_{\beta}(s,t) \sum_{\alpha\in \supp(g_k)}
\frac{h_{(\beta,\alpha)_{q+1}}(t) }{\<\mu_k,s\>-\<\nu_k,t\>+d_k^\beta}
\int_{\RR_+^n}\frac{z^{s+\beta+\alpha}}{f(z)^{t+q+1}}\,\frac{dz}{z}\\
&= \sum_{\beta'}L_{\beta'}(s,t) \int_{\RR_+^n}\frac{z^{s+\beta'}}{f(z)^{t+q'}}\,\frac{dz}{z}, \nonumber
\end{align}
where $\beta' = (\beta,\alpha)$, $q' = q+1$, 
and the resulting rational function 
$L_{\beta'}(s,t)$ is given by 
\[
L_{\beta'}(s,t) = L_{\beta}(s,t)\, \frac{h_{\beta'_{q'}}(t) }{\<\mu_k,s\>-\<\nu_k,t\>+d_k^\beta}.
\]

Since the convergence domain of each term in~\eqref{eq:induction} 
is extended by the distance $d_k$ in the direction determined by $\mu_k$, 
the convergence domain of the sum is similarly extended. 
In addition, since $d_k^\alpha >0$, 
we have that $d_k^{\beta+\alpha} > d_k^{\beta}$; 
therefore, the products $L_\beta(s,t)$ will never repeat factors 
in their denominators. As~\eqref{eq:result int} is in the same form 
as~\eqref{eq:induction}, we may iterate this procedure to extend the domain of convergence.

Finally, note that after $q$ iterations that have extended the domain of convergence of $M_{f}(s,t)$ 
in the direction determined by $\mu_j$ for $q_j$ of the $q$ steps, we obtain a 
meromorphic function on the tube domain given by $(s,t)\in\CC^{n+m}$ such that $\tau + \sum_{j=1}^N q_j = \tau + q >0$ and 
\[
\<\mu_j,\sigma\> > \<\nu_j,\tau\>-q_j d_j, \quad \text{for } j=1,\dots, N. 
\]
Continuing, $M_{f}(s,t)$ can be extended to a meromorphic function 
on $\CC^{n+m}$ as in~\eqref{eq:mero}. 
We note that because the denominator of the products of the rational functions 
$L_{\beta}(s,t)$ never has repeated terms, 
all poles of the extended Euler--Mellin integral are simple. 
Therefore by the removable singularities theorem, $\Phi_f(s,t)$ in~\eqref{eq:mero} 
is an entire function, as desired. 
\end{proof}

The entire function $\Phi_f(s,t)$ is of great interest to the study of $A$-hypergeometric functions. 
The Gamma functions appearing in~\eqref{eq:mero}
might have introduced some unnecessary zeros in the meromorphic continuation of the Euler--Mellin integral, which hinder $A$-hypergeometric applications. 

\begin{remark}\label{rm:poleSemiGroups}
In the proof of Theorem~\ref{thm:mel}, we see that the linear form 
$\< \mu_k,\sigma\> - \<\nu_k,\tau\> - d$ 
appears in the denominator of some rational function $L_\beta$ 
if and only if $d\in G_k$. 
Hence if $G_k \neq \NN$, then our meromorphic continuation has introduced unnecessary zeros 
into the entire function $\Phi_f(s,t)$. 
\end{remark}

\begin{remark}\label{rm:onePolyZeros}
If $m=1$, then $h_{\beta_i}(t) = k_{\beta_i} (t+i)$ for some constant 
$k_{\beta_i}$, where $h_{\beta_i}$ is as in~\eqref{eq:first hs}. 
Therefore each $L_\beta$ is divisible by $(t)_{i+1} = t(t+1)\cdots (t+i)$, 
which can thus be factored outside of the sum~\eqref{eq:induction}. 
In particular, $\widetilde{\Phi_f}(s,t) := \Gamma(t)\Phi_f(s,t)$ is an entire function. 
\end{remark}

We conclude this section with examples to illustrate 
Theorem~\ref{thm:mel} and our recent remarks. 

\begin{example}
Consider the case of $m+1$ linear functions of one variable,
\be\label{eq:ex Mf}
M_f(s,t) = \int_0^\infty \frac{z^s}{(1 + z)^{t_0}(c_1+z)^{t_1}\cdots (c_m + z)^{t_m}} \frac{dz}{z}.
\ee
Note that we have reindexed $t$ for this example. 
When $m=0$,~\eqref{eq:ex Mf} is the Beta function. 
Here $\Phi_f(s,t) = 1/\Gamma(t)$,
or with the notation of Remark~\ref{rm:onePolyZeros}, $\widetilde{\Phi_f}(s,t) = 1$. 
When $m = 1$, we showed in Example~\ref{ex:2F1classical} that
\[
 \Phi_f(s,t) = \frac 1 {\Gamma(t_0+t_1)}\, {}_2 F_1(t_1,t_0+t_1-s;t_0+t_1;1-c_1).
\]
This equality is obtained by the change of variables $w = z/(1+z)$ and 
application of the generalized binomial theorem. 
By similar calculations for $m = 2$,
\[
\Phi_f(s,t) = \frac{1}{\Gamma(t_0+t_1+t_2)}\, 
 	F_1(t_0+t_1+t_2-s, t_1,t_2;t_0+t_1+t_2; 1-c_1,1-c_2),
\]
where $F_1$ denotes the first Appell series. 
For arbitrary $m$ and $|c_i|<1$, 
\[
\Phi_f(s,t) = \frac 1 {\Gamma(t_0+ |t|)} \, \sum_{k\in \NN^{m}} 
\frac{(t_0+|t|-s)_{|k|}}{(t_0+|t|)_{|k|}}\, \frac{(t)_{k}}{k!}\, (1-c)^{k},
\]
where $t = (t_1,\dots,t_m)$, $|t| = t_1 + \dots + t_m$,
and $(t)_k = (t_1)_{k_1}\cdots (t_m)_{k_m}$.
\end{example}

\begin{example}\label{ex:MellinLinearPoly}
Finally, consider the case of one linear function of $n$ variables,
\[
M_f(s,t) = \int_{\RR_+^n} 
\frac{z_1^{s_1}\cdots z_n^{s_n}}{(1 + z_1 + \dots + z_n)^{t}} 
\frac{dz_1\wedge \cdots \wedge dz_n}{z_1\cdots z_n}.
\]
We claim that
\[
 M_f(s,t) = \frac{\Gamma(s_1)\cdots \Gamma(s_n)\Gamma(t-s_1-\dots-s_n)}{\Gamma(t)},
\]
and hence $\Phi_f(s,t) = 1/\Gamma(t)$.
This is clear when $n=1$ because we again have the Beta function. 
For $n>1$ one can argue by induction, making the change of variables 
given by $w_n = z_n$ and $w_i = z_i/(1+z_n)$ for $i\neq n$.
To generalize this example to an arbitrary simplex, consider the Euler--Mellin integral
\[
M_f(s,t) = \int_{\RR_+^n} 
\frac{z_1^{s_1}\cdots z_n^{s_n}}{(1 + z^{T_1} + \dots + z^{T_n})^{t}} 
\frac{dz_1\wedge \cdots \wedge dz_n}{z_1\cdots z_n},
\]
where the exponent vectors $T_i$ are the columns of an invertible matrix $T$. By the change of variables $w_i= z^{T_i}$ we find that
\[
 M_f(s,t) = \frac{\Gamma((T^{-1}s)_1)\cdots \Gamma((T^{-1}s)_n)\Gamma(t-|T^{-1}s|)}{|\det(T)|\,\Gamma(t)}.
\]
\end{example}

\section{Relation to coamoebas}
\label{sec:coamoebas}

For Theorems~\ref{thm:expgro} and~\ref{thm:mel} to hold, 
each $f_i(z)$ must be completely nonvanishing on the positive orthant. 
This is a strong restriction that many polynomials will not fulfill. 
However, the goal of this section is to modify this hypothesis by 
considering the coamoeba of $f(z)$.

The amoeba $\cA_f$ and the coamoeba $\cA'_f$ of a polynomial $f$ 
are defined to be the images of the zero set 
$Z_f=\{ z\in(\CC_*)^n \mid f(z)=0\}$
under the real and imaginary parts of the coordinate-wise 
complex logarithm mapping, $\Log$ and $\Arg$, respectively. 
More precisely, if $\Log(z)=(\log|z_1|,\ldots,\log|z_n|)$ 
and $\Arg(z)=(\arg(z_1),\ldots,\arg(z_n))$, then the amoeba and coamoeba of $f$ are, respectively, 
\begin{equation}\label{eq:def coamoeba}
\cA_f:=\Log(Z_f)\qquad \text{and}  \qquad \cA'_f:=\Arg(Z_f). 
\end{equation}

The amoeba $\cA_f$ lies in $\RR^n$; 
however, since the argument mapping is multivalued, 
the coamoeba $\cA'_f$ can be viewed either in the 
$n$-dimensional torus $\TT^n = (\RR/2\pi\ZZ)^n$ 
or as a multiply periodic subset of $\RR^n$. 
Amoebas were introduced by Gelfand, Kapranov, and Zelevinsky in~\cite{GKZa}, 
while the term coamoeba was first used by the third author in 2004 
at a conference at Johns Hopkins University. 

The following results provide the connection between coamoebas and 
Euler--Mellin integrals.

\begin{pro}\label{prop:coamoeba}
For $\theta \in \TT^n$, a polynomial $f(z)$ is completely nonvanishing 
on the set $\Arg^{-1}(\theta)$ if and only if $\theta \notin \overline{\cA'_f}$.
\end{pro}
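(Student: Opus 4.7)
The plan is to prove both implications by contrapositive, using the one-parameter rescaling $z\mapsto r^\mu z$ (with $r>0$ real and $\mu$ a primitive lattice vector) to pass between $f$ and its truncations $f_\Gamma$.

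For the direction $(\Leftarrow)$, suppose $f_\Gamma(z_0)=0$ for some face $\Gamma$ of $\Delta_f$ and some $z_0\in\Arg^{-1}(\theta)$. If $\Gamma=\Delta_f$ we are done, so take $\Gamma$ proper; choose a primitive lattice vector $\mu$ in the relative interior of the inner normal cone to $\Gamma$, and let $\nu=\min_{\alpha\in\Delta_f}\<\mu,\alpha\>$, attained precisely on $\Gamma$. Then
\[
F_r(z):=r^{-\nu}f(r^\mu z)=f_\Gamma(z)+\sum_{\alpha\notin\Gamma}c_\alpha\,r^{\<\mu,\alpha\>-\nu}\,z^\alpha
\]
converges to $f_\Gamma$ uniformly on compact subsets of $(\CC_*)^n$ as $r\to 0^+$. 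To produce zeros of $F_r$ near $z_0$, I would pick a complex line $L$ through $z_0$ on which $f_\Gamma|_L$ does not vanish identically; such an $L$ exists since $f_\Gamma$ is a nonzero polynomial. Then $f_\Gamma|_L$ has an isolated zero at $z_0$, and Hurwitz's theorem in one complex variable yields zeros $z(r)\in L$ of $F_r|_L$ with $z(r)\to z_0$ as $r\to 0^+$. Because $r$ is real and positive, $\Arg(r^\mu z(r))=\Arg(z(r))\to\Arg(z_0)=\theta$, and $r^\mu z(r)\in Z_f$ places $\theta$ in $\overline{\cA'_f}$.

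For the direction $(\Rightarrow)$, pick $z_k\in Z_f$ with $\Arg(z_k)\to\theta$. If $\Log(z_k)$ has a bounded subsequence, extract $z_k\to z_\infty\in(\CC_*)^n$; by continuity $f(z_\infty)=0$ and $\Arg(z_\infty)=\theta$, so $f_{\Delta_f}=f$ witnesses failure of complete nonvanishing. Otherwise $|\Log(z_k)|\to\infty$, and after a subsequence $\Log(z_k)/|\Log(z_k)|$ converges to some unit vector $u\in\RR^n$. I would then embed $(\CC_*)^n$ in a smooth projective toric variety $X_\Sigma$ whose fan refines the normal fan of $\Delta_f$. By compactness of $X_\Sigma$, a subsequence of $z_k$ converges to a point $z_\infty$ in the torus orbit $O_\sigma$ attached to the cone $\sigma\in\Sigma$ containing $u$, and $\sigma$ corresponds to a proper face $\Gamma$ of $\Delta_f$. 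A local chart calculation shows that the closure of $Z_f$ meets $O_\sigma$ in the zero set of $f_\Gamma$ regarded as a function on $O_\sigma$, so $f_\Gamma(z_\infty)=0$; moreover the argument map of $O_\sigma$ receives the projection of $\theta$. Finally, the quasi-homogeneity of $f_\Gamma$ in the normal directions to $\Gamma$ lifts $z_\infty$ to a point of $Z_{f_\Gamma}\cap\Arg^{-1}(\theta)\subset(\CC_*)^n$, giving the desired failure of complete nonvanishing.

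The main obstacle is the unbounded case of the forward direction: reading off a single face $\Gamma$ and lifting the boundary limit to a zero of $f_\Gamma$ whose full argument equals $\theta$. The toric-compactification viewpoint organizes this cleanly, but one can alternatively iterate subsequence extractions that stabilize which monomials of $f(z_k)$ dominate in modulus, and then take limits of the normalized equation $z_k^{-\alpha^*}f(z_k)=0$ for any vertex $\alpha^*\in\Gamma$; the combinatorics there is routine but notationally heavier.
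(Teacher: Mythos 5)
The paper does not give a direct argument here: it observes that the proposition is equivalent to the identity $\overline{\cA'_f}=\bigcup_{\Gamma}\cA'_{f_\Gamma}$ over faces $\Gamma$ of $\Delta_f$, and then cites Johansson \cite{J} and Nisse--Sottile \cite{NS} for that identity. Your proposal instead tries to prove the statement from scratch, so it is a genuinely different route. Your $(\Leftarrow)$ direction is complete and correct: the rescaling $F_r(z)=r^{-\nu}f(r^{\mu}z)\to f_\Gamma(z)$ along an inner normal $\mu$ of $\Gamma$, combined with a line section and Hurwitz, produces points of $Z_f$ whose arguments converge to $\theta=\Arg(z_0)$, and this is exactly how one proves the inclusion $\cA'_{f_\Gamma}\subseteq\overline{\cA'_f}$ (the ``easy'' half of the cited identity). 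You should just note that $f_\Gamma(z_0)=0$ forces $\Gamma$ not to be a vertex, so $f_\Gamma$ really is a nonconstant polynomial with isolated zeros on a generic line.

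The $(\Rightarrow)$ direction, however, still has a genuine gap, and you correctly locate it yourself. The crucial step is the assertion that in a toric compactification $X_\Sigma$ whose fan refines the normal fan of $\Delta_f$, the closure $\overline{Z_f}$ meets the boundary orbit $O_\sigma$ exactly in the zero set of the descent of $f_\Gamma$, where $\Gamma$ is the face of $\Delta_f$ whose normal cone contains $\sigma$. This is not a ``local chart calculation''; it is a nontrivial theorem of toric/tropical geometry (Tevelev-type tropical compactification statements, or the analysis in \cite{J} and \cite{NS} of boundary limits of $Z_f$), and it is precisely the harder half of $\overline{\cA'_f}=\bigcup_\Gamma\cA'_{f_\Gamma}$ that the paper outsources. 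You have essentially relocated the gap rather than closed it. Also be careful with sign conventions when matching the limiting direction $u$ of $\Log(z_k)$ to a cone $\sigma$ (going to infinity in direction $u$ corresponds to approaching the orbit of a cone containing $-u$ in the standard toric conventions), and note that $\sigma\in\Sigma$ need not itself correspond to a face of $\Delta_f$ if $\Sigma$ strictly refines the normal fan; the relevant face $\Gamma$ is the one for the cone of the normal fan whose relative interior meets $\relint(\sigma)$. The final lifting step (using quasi-homogeneity of $f_\Gamma$ in the directions of $\operatorname{span}\sigma$ together with the fact that the projection of $\theta$ to the argument torus of $O_\sigma$ equals $\Arg(z_\infty)$) is correct once these points are in place. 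So: the $(\Leftarrow)$ half is done, the $(\Rightarrow)$ half still rests on an unproved but citable theorem.
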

\begin{proof}
The claim is equivalent to the statement 
\[
\overline{\cA'_f} = \bigcup_{\Gamma}\cA'_{f_\Gamma},
\]
where $f_\Gamma$ is the truncated polynomial with support $\Gamma$. 
This has been proven by Johansson \cite{J} 
and independently by Nisse and Sottile \cite{NS}.
\end{proof}

Thus for polynomials $f_1,\dots,f_m$ such that 
the closure of the coamoeba of $f(z) = \prod_{i=1}^m f_i(z)$ is a proper subset of 
$\TT^n$, there is a $\theta \notin \overline{\cA'_f}$ for which the 
Euler--Mellin integral with respect to $\theta$ is well-defined: 
\be\label{eq:theta eulermellin}
M_f^{\theta}(s,t) := \int_{\Arg^{-1}(\theta)} \frac{z^s}{f(z)^t} \frac{dz}{z}.
\ee
Note that after fixing the matrix $A$, and hence the set of monomials of the polynomials $f_i$, any choice of coefficients for the $f_i$ with positive real part ensures that $0\in\TT^n$. In particular, this means there is always a choice of coefficients for the $f_i$ so that $\overline{\cA'_f}$ is a proper subset of $\TT^n$. 
For another discussion on the existence of components of the complement of $\overline{\cA'_f}$, see Remark~\ref{rem:lopsided}. 

As \eqref{eq:theta eulermellin} differs from our earlier definition of the Euler--Mellin integral 
in~\eqref{eq:eulermellin} only by a change of variables, 
it is immediate that $\theta$-analogues of 
Theorems~\ref{thm:expgro} and~\ref{thm:mel} hold. 
In addition, a slight perturbation of $\theta$ does not impact 
the value of~\eqref{eq:theta eulermellin}. 

\begin{thm}
The Euler--Mellin integral $M_f^\theta$ of \eqref{eq:theta eulermellin} is 
a locally constant function in $\theta$. Thus it depends only on the choice 
of connected component $\Theta$ of the complement of $\overline{\cA'_f}$, 
and we thus write $M_f^\Theta := M_f^\theta$.  Accordingly, there is a meromorphic continuation of $M_f^\Theta$, denoted 
$\Phi_f^\Theta := \Phi_f^\theta$. 
\end{thm}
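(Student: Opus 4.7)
The strategy is contour deformation: given $\theta_0, \theta_1$ in the same connected component $\Theta$ of $\TT^n \setminus \overline{\cA'_f}$, connect them by a smooth path $\theta_\tau$, $\tau\in[0,1]$, lying entirely in $\Theta$. By Proposition~\ref{prop:coamoeba}, each slice $\Arg^{-1}(\theta_\tau)$ avoids $Z_f$, so the $\theta$-analogue of Theorem~\ref{thm:expgro} guarantees that $M_f^{\theta_\tau}(s,t)$ is defined on a common tube of convergence in $(s,t)$. We sweep these slices into the $(n+1)$-chain
\[
\Sigma := \bigcup_{\tau\in[0,1]} \Arg^{-1}(\theta_\tau) \subset (\CC_*)^n \setminus Z_f,
\]
and plan to apply Stokes' theorem to the exhausting truncations $\Sigma_R := \Sigma \cap \{|\Log z| \le R\}$.

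The integrand $\omega := z^s f(z)^{-t}\, \tfrac{dz_1 \wedge\cdots\wedge dz_n}{z_1\cdots z_n}$ is a holomorphic $(n,0)$-form on $(\CC_*)^n\setminus Z_f$, hence automatically $d$-closed (the $\partial$-component has bidegree $(n+1,0)$, which vanishes on a complex $n$-manifold, while $\bar\partial\omega=0$ by holomorphy). Stokes' theorem applied to $\Sigma_R$ yields
\[
\int_{\Arg^{-1}(\theta_1)\cap\{|x|\le R\}} \!\!\omega - \int_{\Arg^{-1}(\theta_0)\cap\{|x|\le R\}} \!\!\omega \;=\; \int_{\partial_{\mathrm{side}}\Sigma_R}\omega,
\]
where $\partial_{\mathrm{side}}\Sigma_R$ is the ``side'' boundary at $|\Log z| = R$. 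The main task is to show that this side-boundary term tends to zero as $R\to\infty$. The estimate~\eqref{eq:bound} used in the proof of Theorem~\ref{thm:expgro} holds with constants $c(\theta_\tau), k(\theta_\tau) > 0$ that depend continuously on $\theta_\tau$; compactness of $[0,1]$ then yields uniform constants $c,k>0$ for which $|\omega| \le C e^{-k|x|}$ times a bounded $n$-form on $\Sigma$. Thus the side-boundary integral is $O(R^{n-1} e^{-kR})\to 0$, and letting $R\to\infty$ gives $M_f^{\theta_0}(s,t) = M_f^{\theta_1}(s,t)$ throughout the convergence tube.

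The meromorphic continuation statement then follows by uniqueness: the $\theta$-analogue of Theorem~\ref{thm:mel} produces an expression $M_f^\theta(s,t) = \Phi_f^\theta(s,t)\prod_{k} \Gamma(\<\mu_k,s\> - \<\nu_k,t\>)$ valid for each $\theta\in\Theta$. Since the product of Gamma factors does not depend on $\theta$ and the left-hand side is locally constant in $\theta$ on the original convergence tube, $\Phi_f^\theta(s,t)$ is also locally constant there; by uniqueness of meromorphic continuation in $(s,t)$, this carries over to all of $\CC^{n+m}$, justifying the notation $\Phi_f^\Theta := \Phi_f^\theta$. The hardest part of the argument is establishing the \emph{uniform} exponential decay along the family $\{\theta_\tau\}_\tau$: one must trace through the construction of the constants $c_i, k_i$ in the proof of Theorem~\ref{thm:expgro} to verify continuous dependence on the argument $\theta$, which ultimately reflects the fact that complete nonvanishing is an open condition on the coefficients of the $f_i$ (equivalently, on $\theta$ in the slicing direction).
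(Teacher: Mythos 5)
Your argument is correct and arrives at the same conclusion, but it is organized differently from the paper's. The paper proves the $n=1$ case by elementary residue calculus — closing the two rays $\Arg^{-1}(\theta_0)$ and $\Arg^{-1}(\theta_1)$ by circular arcs at radii $e^{-R}$ and $e^R$, observing that the arc contributions vanish by the decay estimate from Theorem~\ref{thm:expgro} — and then handles general $n$ by fixing all but one variable and repeating the one-dimensional deformation coordinate-by-coordinate. You instead build the full $(n{+}1)$-dimensional sweep $\Sigma$ directly and invoke Stokes' theorem once, replacing the iterated residue argument by a single closed-form deformation. Both routes rest on the same two pillars: the integrand $z^s f(z)^{-t}\,dz/z$ is a closed holomorphic $n$-form on a simply connected chain avoiding $Z_f$, and the estimate~\eqref{eq:bound} kills the boundary at $|\Log z|=R$ as $R\to\infty$. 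Your version is more uniform and avoids the (implicitly bookkept) question of whether the intermediate one-variable slices in the paper's iteration stay within the convergence tube; in return you must pay for \emph{uniform} decay constants along the whole path $\{\theta_\tau\}$, which you correctly flag and dispose of by compactness of $[0,1]$. Two details worth making explicit: (i) $\Sigma$ is simply connected (it is $\RR^n\times[0,1]$ in $\Log$--$\tau$ coordinates), which is what guarantees a single-valued branch of $z^s f(z)^{-t}$ on $\Sigma$; (ii) the truncation at $|\Log z|\le R$ handles both the $z\to 0$ and $z\to\infty$ ends simultaneously, which is cleaner than the paper's phrasing that mentions only the outer arc. The final step — transferring local constancy from $M_f^\theta$ to $\Phi_f^\theta$ via the $\theta$-independence of the Gamma prefactor and uniqueness of meromorphic continuation — is exactly what the paper intends, though it leaves this implicit.
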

\begin{proof}
First consider the case $n=1$, and suppose that $\theta_1$ 
and $\theta_2$ lie in the same connected component of the complement 
of $\overline{\cA'_f}$; in fact, assume that the interval 
$[\theta_1,\theta_2]\subseteq \TT^n\setminus\overline{\cA'_f}$. 
In other words, $f(z)$ has no zeros with arguments in this interval, 
and hence $z^{s-1}/f(z)^t$ is analytic in the corresponding domain. 
Connecting the two rays $\Arg^{-1}(\theta_1)$ and $\Arg^{-1}(\theta_2)$ 
with the circle section of radius $r$ yields a closed curve, 
and the integral of $z^{s-1}/f(z)^t$ over this (oriented) curve is zero 
by residue calculus. By the proof of Theorem~\ref{thm:expgro}, 
the integral over the circle section tends to 0 as $r\rightarrow \infty$, 
so the two Euler--Mellin integrals $M_f^{\theta_1}$ and $M_f^{\theta_2}$ 
are equal.

In arbitrary dimension, we obtain the desired equality by considering 
one variable at a time while the remaining variables are fixed.
\end{proof}

\begin{example}\label{ex:2F1 first}
Revisiting the polynomial 
$f(z_1,z_2) = c_1 + c_2z_1 + c_3z_2 + c_4 z_1 z_2$ 
from the introduction, 
we see that if we choose $\theta = (\arg(c_1/c_2),\arg(c_1/c_3))$, then
\medskip
\[
\Phi_f^{\Theta}(s_1,s_2,t) =  
\frac{c_1^{s_1+s_2-t}c_2^{-s_1}c_3^{-s_2}}{\Gamma(t)^2}\,{}_2F_1\Big(s_1,s_2;t;1-\frac{c_1 c_4}{c_2c_3}\Big),
\medskip
\]
where $\Theta$ is the component of the complement of $\overline{\cA'_f}$ 
containing $\theta$. In accordance with Remark~\ref{rm:onePolyZeros}, 
we may ignore one of the factors $\Gamma(t)$ in the denominator, 
while ${}_2F_1/\Gamma(t)$ is the regularized Gauss hypergeometric function.
\end{example}

\section{Integral representations of $A$-hypergeometric functions}
\label{sec:gkz}

We now fix a connected component $\Theta$ of the complement of $\overline{\cA'_f}$ and 
study the entire function $\Phi_f = \Phi_f^\Theta$ from \eqref{eq:theta eulermellin}. 
In particular, we consider its dependence 
on the coefficients $c_i=\{c_{i,\alpha}\}$ of the polynomials $f_i$, 
where $f(z)=\prod_{i=1}^m f_i$ and $f_i = \sum_{j=1}^{r_i} c_{ij} z^{\alpha_{ij}}$ (so $r_i = |\supp(f_i)|$). 
In order to emphasize this dependence, we write $\Phi_f(s,t,c)$ 
rather than $\Phi_f(s,t)$. 
Generalizing \cite[\S6]{NP}, 
we show that $\Phi_f$ is an $A$-hypergeometric function 
in the sense of Gelfand, Kapranov, and Zelevinsky. 
More precisely, $c\mapsto\Phi_f(s,t,c)$ satisfies the 
$A$-hypergeometric system of partial differential equations, where 
the exponents $\alpha_{ij}$ of the $f_i$ 
provide a matrix $A$ via the Cayley trick, 
\be\label{eq:A}
A = \left[\begin{matrix}
	1&\cdots&1&	0&\cdots&0&	\quad& 0&\cdots&0\\ 
	0&\cdots&0&	1&\cdots&1&	\quad& 0&\cdots&0\\
	\vdots&\cdots&\vdots&	\vdots&\cdots&\vdots& \cdots& \vdots&\cdots&\vdots\\
	0&\cdots&0& 0&\cdots&0&\quad& 1&\cdots&1\\
	\alpha_{11}&\cdots&\alpha_{1r_1} & 
		\alpha_{21}&\cdots&\alpha_{2r_2}  & 
		\quad & \alpha_{m_1}&\cdots&\alpha_{mr_m}
\end{matrix}\right] \in \ZZ^{(m+n)\times (r_1+\cdots +r_m)}, 
\ee
and the desired homogeneity parameter is $\beta = -(t,s)$.
Set $r:= \sum_{i=1}^m r_i$. 

We first recall the definition of an $A$-hypergeometric system. 
For a vector $v\in\ZZ^r$, denote by $u_+$ and $u_-$ the unique vectors in 
$\NN^r$ with disjoint support  such that $u=u_+ - u_-$.

\begin{dfn}\label{def:gkz}
Let $A = (a_{ij})\in \ZZ^{(m+n)\times r}$ be a matrix. 
Define the differential operators $\square_u$ and $E_i$ to be 
\[
\square_u:=\Bigl(\frac{\partial}{\partial c}\Bigr)^{u_+}-\Bigl(\frac{\partial}{\partial c}\Bigr)^{u_-}
\quad\text{and}\quad 
E_i:=\sum_{j=1}^r a_{ij} \frac{\partial}{\partial c_j}.
\]
The \emph{$A$-hypergeometric system} $H_A(\beta)$ 
at $\beta\in\mathbb{C}^{m+n}$ is given by
\begin{align*}
\square_u F(c)&=0 \quad \text{for $u\in\ZZ^r$ with $Au=0$},\\
\text{and}\quad(E_i -\beta_i) F(c) &= 0 \quad \text{for $1\le i \le m+n$}. 
\end{align*}
A local multivalued analytic function $F$ that solves this system is called an 
\emph{$A$-hypergeometric function} with homogeneity parameter $\beta$.
\end{dfn}

The ideal $I_A$ cuts out an affine variety $X_A\subseteq \CC^r$, which has an action of an algebraic torus, $(\CC_*)^{m+n}$. 
To understand the role of the Euler operators $E_i-\beta_i$, note that a germ of a analytic function at a nonsingular point $c\in\CC^r$ that is 
annihilated by $c_1\frac{\partial}{\partial c_1}+c_2\frac{\partial}{\partial c_2}+\cdots+c_r\frac{\partial}{\partial c_r} - \beta_0$ is homogeneous, in the usual sense, of degree $\beta_0$. 
In general, the Euler operators in $M_A(\beta)$ force solutions to have weighted homogeneities. 
From this point of view, it becomes natural to fix $A$ and view $\beta$ as a parameter of $M_A(\beta)$. 

A key problem in the study of an $A$-hypergeometric system is to describe the variation with the parameter $\beta$ of its solution space of germs of analytic functions at a nonsingular point. To begin this study, one must first find solutions of $H_A(\beta)$, that vary nicely with $\beta$. 
Saito, Sturmfels, and Takayama presented an algorithm to compute such a basis for arbitrary $\beta$, called \emph{canonical series solutions}~\cite{SST}. 
However, because this algorithm uses Gr\"obner degeneration, the solutions it produces are not well-suited to the variation of $\beta$. 
For nonresonant $\beta$, Gelfand, Kapranov, and Zelevinsky computed a basis of Euler-type integral solutions~\cite{GKZc}. 
These are also unsuitable for understanding parametric behavior, as their domains of integration are not explicit and the family integrals formed there are not guaranteed to be linearly enough at nongeneric $\beta$. 
In contrast, since our meromorphic continuations of Euler--Mellin integrals are entire in $\beta$, they provide a new tool for describing the parametric variation of $A$-hypergeometric solutions. 

To this end, we now consider the behavior of the entire function $(s,t)\mapsto\Phi_f(s,t,c)$, as described in Theorem~\ref{thm:mel}, when $c$ is viewed as a variable. Let $\Sigma_A\subseteq\CC^r$ denote the singular locus of all $A$-hypergeometric functions, which is the hypersurface defined by the principal $A$-determinant (also known as the full $A$-discriminant)~\cite{GKZa}.

\begin{thm}\label{thm:gkz}
Let $c\in\CC^r\setminus\Sigma_A$ and 
let $\Theta$ be a connected component of $\RR^n\setminus\overline{\cA'_f}$, 
where $f$ is the polynomial
$f(z)=\prod_{i=1}^m f_i$ and $f_i = \sum_{j=1}^{r_i} c_{ij} z^{\alpha_{ij}}$. 
Then the analytic germ $\Phi_f^\Theta(s,t,c)$, for any $\theta\in\Theta$, 
has a (multivalued) analytic continuation to 
$
\mathbb{C}^{m+n}
\times
(\mathbb{C}^r\setminus\Sigma_A)
$ 
that is everywhere $A$-hyper\-geometric (in the variables $c$) with 
homogeneity parameter $\beta = -(t,s)$.
\end{thm}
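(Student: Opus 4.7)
The plan is to verify the Euler and toric operators of $H_A(\beta)$ directly on the Euler--Mellin integral $M_f^\theta$ in the convergence domain of Theorem~\ref{thm:expgro}, and then extend the $A$-hypergeometric identity to all of $\CC^{m+n}\times(\CC^r\setminus\Sigma_A)$ using Theorem~\ref{thm:mel} for the $(s,t)$-direction and holonomicity for the $c$-direction. First, I would write $z = e^{x+i\theta}$ so that
\[
M_f^\theta(s,t,c) \ = \ \int_{\RR^n} e^{\langle s,\,x+i\theta\rangle}\, f(e^{x+i\theta},c)^{-t}\, dx.
\]
The exponential bound~\eqref{eq:bound} is locally uniform in $(s,t)$ on compact subsets of the convergence tube and in $c$ on small polydiscs (since the condition $\theta\notin\overline{\cA'_f}$ persists under small perturbations of $c$); this legitimizes differentiation under the integral with respect to $c_{ij}$, $s_k$ and $t_i$, and ensures vanishing of boundary terms in integration by parts.

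Next, I would verify the Euler equations. The Cayley block structure in~\eqref{eq:A} gives, for $1\le i\le m$, $E_i(f_i) = f_i$ and $E_i(f_k)=0$ for $k\neq i$, whence $E_i(f^{-t}) = -t_i f^{-t}$, so integrating yields $E_i M_f^\theta = -t_i M_f^\theta = \beta_i M_f^\theta$. For $1\le k\le n$, the last $n$ rows of $A$ imply
\[
E_{m+k}\bigl(f^{-t}\bigr) \ = \ z_k\frac{\partial}{\partial z_k}\bigl(f^{-t}\bigr) \ = \ \frac{\partial}{\partial x_k}\bigl(f^{-t}\bigr),
\]
so integration by parts in $x_k$ gives $E_{m+k} M_f^\theta = -s_k M_f^\theta = \beta_{m+k} M_f^\theta$.

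For the toric operators, fix $u\in\ZZ^r$ with $Au=0$ and compute $\partial_c^{u_\pm}(f^{-t})$ directly via the Leibniz rule. The first $m$ rows of $A$ force $\sum_j(u_{ij})_+ = \sum_j(u_{ij})_- =: k_i$ within each Cayley block $i$, so both differentiations produce the identical Pochhammer factor $(-t_i)(-t_i-1)\cdots(-t_i - k_i + 1)$ and the identical exponent $-t_i - k_i$ on $f_i$. The last $n$ rows of $A$ force $\sum_{i,j}\alpha_{ij}(u_{ij})_+ = \sum_{i,j}\alpha_{ij}(u_{ij})_-$, which gives the identical $z$-monomial on both sides. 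Thus $\partial_c^{u_+}(f^{-t}) = \partial_c^{u_-}(f^{-t})$ pointwise, and $\square_u M_f^\theta = 0$ after integration.

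Finally, the Gamma factors in~\eqref{eq:mero} are independent of $c$, so $\Phi_f^\Theta$ satisfies the same equations in $c$ as $M_f^\theta$ in the convergence domain, and since $E_i-\beta_i$ and $\square_u$ have coefficients polynomial in $(s,t)$, Theorem~\ref{thm:mel} combined with the identity theorem propagates these equations to all $(s,t)\in\CC^{m+n}$. The continuation in $c$ to $\CC^r\setminus\Sigma_A$ then follows from the holonomicity of $H_A(\beta)$, whose singular locus coincides with the zero set of the principal $A$-determinant~\cite{GKZa}. The main obstacle is the algebraic bookkeeping in the verification of the toric operators, where the Leibniz-rule calculation must be tracked carefully through the Cayley block structure; this is also the step that makes precise the matching $\beta = -(t,s)$ between the Euler--Mellin and $A$-hypergeometric parameters.
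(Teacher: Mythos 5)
Your proof is correct and follows essentially the same route as the paper: both establish the $A$-hypergeometric property on the convergence tube of Theorem~\ref{thm:expgro} by differentiating under the integral sign (justified by the uniform bound~\eqref{eq:bound}), then propagate to all of $\CC^{m+n}\times(\CC^r\setminus\Sigma_A)$ via Theorem~\ref{thm:mel} and the identity theorem. The one cosmetic difference is that the paper defers the verification of the Euler and toric operators to the argument of \cite[Thm.~5.4.2]{SST}, whereas you carry out that Leibniz/Cayley-block computation explicitly---which is exactly what the cited theorem does---so the substance is the same, just more self-contained.
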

\begin{proof} 
Let us first consider the case $\tau:=\Re t > 0$ and $\sigma:=\Re s\in\inter (\tau\Delta_f)$, 
where we have the integral representation
\be\label{eq:phi hyp}
\Phi_f^\Theta(s,t,c)=
\frac{1}{\prod_k\Gamma(\<\mu_k,s\>-\<\nu_k,t\>)}
\int_{\text{\rm Arg}^{-1}(\theta)}\frac{z^s}{f(z)^t}\frac{dz}{z}.
\ee

Fix a representative $\theta\in\Theta$. 
As $\theta$ is disjoint from $\overline{\cA'_f}$ for polynomials $f$ with 
coefficients $c$ near the original ones, say in a small ball $B(c)$, 
the integral in \eqref{eq:phi hyp} does indeed define an analytic germ $\Phi_f = \Phi_f^\theta(s,t,c)$. 
By Theorem~\ref{thm:mel}, $\Phi_f$ can be extended to an entire function 
with respect to the variables $s$ and $t$. 
In other words, $\Phi_f$ has been analytically extended to the infinite cylinder $\CC^{m+n} 
\times B(c)$. 

To see that $\Phi_f$ is an $A$-hypergeometric function with homogeneity parameter $\beta$ as given, 
we fix $s$ and $t$ under the above condition, noting that the product of Gamma functions in $\Phi_f$ 
is simply a nonzero constant. 
Thus it is enough to show that the integral itself is $A$-hypergeometric at $\beta$. 
This is accomplished through the argument of \cite[Thm.~5.4.2]{SST}, which applies since 
differentiation and integration 
may be interchanged 
because Euler--Mellin integrals are uniformly convergent by the bound in~\eqref{eq:bound}. See also \cite[Remark~2.8(b)]{GKZc}.

Having established that $\Phi_f$ is an $A$-hypergeometric function 
in the product domain given by $(s,t,c)$ in 
$(\RR_+\inter(\tau\Delta_f)+i\RR^n)\times(\RR_+^m\times i\RR^m)
\times B(c)$, 
it follows from 
the uniqueness of analytic continuation that its extension to the cylinder 
$\CC^{m+n}\times B(c)$ will remain $A$-hypergeometric. 
Now for each fixed $(s,t)$, there is a (typically multivalued) 
analytic continuation of $c\mapsto\Phi_f = \Phi_f^\theta(s,t,c)$ 
from $B(c)$ to all of $\CC^r\setminus\Sigma_A$.  
As these continuations still depend analytically on $s$ and $t$, 
we have now achieved the desired analytic continuation to the full product 
domain $\CC^{m+n} \times (\CC^r\setminus\Sigma_A)$. 
The uniqueness of analytic continuation again guarantees that 
$\Phi_f$ will everywhere satisfy the $A$-hypergeometric system with 
the homogeneity parameter $\beta$, as desired.
\end{proof}

In \cite{GKZc, SST}, for generic (nonresonant) parameters $\beta$, a basis of solutions of $H_A(\beta)$ are given via generalized Euler integrals. 
The set of nongeneric, or resonant, parameters is given by the union over all faces $\Gamma$ of the polyhedral cone $\RR_{\geq 0}A$ over the columns of $A$ of $\ZZ^{m+n}+\CC \Gamma$.
The dimension of the solution space in this case is given by $\vol(A)$, which is $(m+n)!$ times the Euclidean volume of the convex hull of $A$ and the origin. 
Under certain conditions on $A$, a similar result holds for our meromorphic continuations of Euler--Mellin integrals. 
We consider here the case that $f$ is a univariate polynomial, followed by another example for which linear independence of these integrals can be seen directly by analyzing coamoebas. 
Such linear independence will also be addressed later in Corollary~\ref{cor:LopsidedIntegralIndependence}. 

\begin{pro}\label{prop:indep}
If $\beta$ is nonresonant and $n=1$, then 
the extended Euler--Mellin integrals 
$\Phi_f^\Theta(s,t,c)$, where $\Theta$ ranges over the components of  
$\TT^{n}\setminus\overline{\cA'_f}$, 
provide a basis of solutions to $H_A(\beta)$.  \end{pro}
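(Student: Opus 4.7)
The plan is to match the generic rank of $H_A(\beta)$ with the number of components of $\TT\setminus\overline{\cA'_f}$ and then to establish linear independence of the extended integrals. For the dimension count, when $n=1$ the polynomial $f=\prod_{i=1}^m f_i$ is univariate, so for $c\notin\Sigma_A$ the zero set $Z_f$ consists of $d:=\sum_i(\max\supp f_i-\min\supp f_i)=\sum_i\deg f_i$ distinct points in $\CC_*$, whose arguments are also generically distinct; thus $\overline{\cA'_f}$ is $d$ points in $\TT$ and $\TT\setminus\overline{\cA'_f}$ has exactly $d$ arc-components. A direct computation (for instance by induction on $m$ from the transparent case $m=1$, where $\vol(A)$ reduces to an interval length) shows that $\vol(A)=d$ for the Cayley matrix in~\eqref{eq:A}, so the standard GKZ rank theorem gives $\dim\Sol(H_A(\beta))=d$ for nonresonant $\beta$, matching the number of components $\Theta$.

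By Theorem~\ref{thm:gkz} each germ $\Phi_f^\Theta$ already solves $H_A(\beta)$, and since $\Phi_f^\Theta=M_f^\Theta\prod_k\Gamma(\<\mu_k,s\>-\<\nu_k,t\>)^{-1}$ with the Gamma factor a nonzero common constant at generic $(s,t)$, the claim reduces to showing that the $d$ germs $c\mapsto M_f^{\Theta_j}(s,t,c)$ are linearly independent over $\CC$. My preferred route uses twisted de Rham theory: fix generic $(s,t)$ in the convergence domain of Theorem~\ref{thm:expgro} and let $L^\vee$ be the rank-one local system on $U:=\CC_*\setminus Z_f$ whose local sections are branches of $z^{-s}f^t$. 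Then $\chi(U)=-d$ and, for nonresonant parameters, $H^0(U;L^\vee)=0$, so $\dim H^{\mathrm{rd}}_1(U;L^\vee)=d$. Each ray $\Arg^{-1}(\theta_j)$ with $\theta_j\in\Theta_j$, carrying the natural branch of $z^sf^{-t}$ and the exponential decay estimate of~\eqref{eq:bound}, represents a class in $H^{\mathrm{rd}}_1(U;L^\vee)$ whose period against $\frac{dz}{z}$ is $M_f^{\Theta_j}$. Since the $d$ arcs $\Theta_j$ partition $\TT$ with one puncture separating each consecutive pair, these $d$ cycles should form a basis of the $d$-dimensional space $H^{\mathrm{rd}}_1(U;L^\vee)$, and nondegeneracy of the period pairing then forces the $M_f^{\Theta_j}$ to be $\CC$-linearly independent as germs in $c$ near a generic basepoint.

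The hardest step is verifying that the $d$ rays genuinely form a basis of the rapid-decay twisted homology: this is intuitively clear because each ray ``passes between'' a distinct adjacent pair of punctures, but it requires care with the behavior of the local system at both ends of the ray. A more hands-on substitute is a Picard--Lefschetz-style induction: move $c$ along a path in $\CC^r\setminus\Sigma_A$ toward a codimension-one stratum on which two consecutive roots of $f$ coalesce, so that the unique pinching component of $\TT\setminus\overline{\cA'_f}$ acquires a distinguished vanishing-cycle behavior (a logarithm or simple pole in the deformation parameter) not shared by the other $d-1$ integrals. Any relation $\sum_j a_j\Phi_f^{\Theta_j}\equiv 0$ then forces the coefficient of that pinching component to vanish, and iterating over such strata strips off the remaining coefficients. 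In either approach, the nonresonance hypothesis on $\beta$ enters exactly where needed: to ensure that the Gamma factor in Theorem~\ref{thm:mel} is nonzero and that the monodromy of $L^\vee$ is sufficiently generic for the pairing or vanishing-cycle arguments to apply.
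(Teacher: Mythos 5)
Your dimension count is correct and matches the paper's: for generic $c\notin\Sigma_A$ the univariate polynomial $f=\prod_i f_i$ has $\vol(A)$ simple zeros in $\CC_*$ with distinct arguments, so $\TT\setminus\overline{\cA'_f}$ has exactly $\vol(A)$ arc components.

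For linear independence, however, you take a genuinely different route, and you leave the crucial step as a gap that you yourself flag. You propose either (i) showing the $\vol(A)$ rays form a basis of rapid-decay twisted homology $H_1^{\rm rd}(U;L^\vee)$ and then invoking nondegeneracy of the period pairing, or (ii) a Picard--Lefschetz degeneration argument tracking singular behavior as consecutive roots coalesce. Both are reasonable in principle, but in (i) the claim that the rays form a homology basis is exactly what needs proof (you concede it ``requires care with the behavior of the local system at both ends of the ray''), and (ii) is only sketched. The paper sidesteps all of this by a short change-of-basis argument: by \cite{GKZc}, for nonresonant $\beta$ the $\vol(A)$ compact-cycle integrals $\int_{B_\epsilon(r_i)}z^s f^{-t}\,dz/z$ over small circles around the zeros already form a basis of the solution space; each such circle is homologous (as a rapid-decay chain, using the decay estimate of Theorem~\ref{thm:expgro}) to the difference of the two rays flanking $r_i$; hence the circle integrals lie in the span of the ray integrals, forcing the $\vol(A)$ ray integrals to span a space of dimension at least $\vol(A)$ and therefore be independent. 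I would recommend you replace your twisted de Rham outline with this elementary argument: you already have all its ingredients (GKZ's result, the exponential decay from Theorem~\ref{thm:expgro}, and the fact that consecutive rays bound a region containing a single root), and it closes the gap you identified without developing the period-pairing machinery. You should also add the paper's final remark that for nongeneric $c\notin\Sigma_A$ (where some arguments may coincide) the same comparison still yields linear independence for the surviving components.
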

\begin{proof}
For a generic choice of coefficients $c$, $f$ will have distinct roots 
$r_1,r_2,\dots,r_{\vol(A)}$ with distinct arguments 
$0\leq \theta_1<\theta_2<\dots<\theta_{\vol(A)}<2\pi$. 
That is, $\overline{\cA'_{f}}$ will have $\vol(A)$-many components in its complement in $\TT^1$.
For convenience, we use the convention that $\theta_{\vol(A)+1}:= \theta_1$.
Now fix $0<\epsilon\ll 1$ so that the circles 
$B_\epsilon(r_i) := \{ z\in\CC \mid  |z-r_i|=\epsilon \}$, 
viewed as 1-chains oriented counterclockwise, have disjoint supports for $1\leq i\leq \vol(A)$. 
By \cite{GKZc}, the integrals 
\[
\int_{ B_\epsilon(r_i) } \frac{z^s}{f(z)^t} \frac{dz}{z} 
\qquad \text{for }\ 1\leq i\leq \vol(A) 
\]
are linearly independent, forming a basis for the solution space of $H_A(\beta)$. 
By the convergence of $M_f^{\theta_i}(s,t,c)$ from Theorems~\ref{thm:expgro} and~\ref{thm:mel}, we see that as non-compact chains, 
$\Arg^{-1}(\theta_i)-\Arg^{-1}(\theta_{i+1})$ and 
$B_\epsilon(r_i)$ are homologous, providing the second statement. 
For a nongeneric choice of coefficients $c$ for $f$, which still lie away from $\Sigma_A$, a similar argument implies linear independence of the Euler--Mellin integrals given by the distinct components of the complement of $\overline{\cA'_f}$. 
\end{proof}

\begin{example}\label{ex:gauss with $t$}
In Example~\ref{ex:2F1 first}, it was shown that if 
$f(z) = c_1+c_2 z_1 + c_3 z_2 + c_4 z_1 z_2$ and 
$\theta$ is near $(\arg(c_1/c_2),\arg(c_1/c_3))$, then 
\[
\Phi_f^\theta(s,t,c) = \frac{c_1^{s_1+s_2-1}c_2^{-s_1}c_3^{-s_2}}{\Gamma(t)^2}
	\ {}_2F_1\hspace{-.8mm}\left(\hspace{-1mm}s_1,s_2;t;1-\frac{c_1c_4}{c_2c_3}\right). 
\]
By Theorem~\ref{thm:gkz}, this is a solution of $H_A(\beta)$ for 
\[
A = \left[\begin{matrix}
1&1&1&1\\0&1&0&1\\0&0&1&1
\end{matrix}\right]
\quad\text{and}\quad \beta = -(t,s).
\] 
Now consider points $c$ of the form $(1,i,i,c_4)$, where $c_4$ is near 1. 
Define the polynomials 
\begin{align*}
f_\rho &:= 1+e^{\frac{\pi i}{2}\rho}z_1 + e^{\frac{\pi i}{2}\rho}z_2 + c_4 z_1 z_2  
	&\text{for } \ 0\le \rho\le 1\phantom{.}\\
\text{and}\quad 
g_\rho &:= 
	1+e^{\frac{\pi i}{2}(2-\rho)}z_1 + e^{\frac{\pi i}{2}(2-\rho)}z_2 + c_4 z_1 z_2 
	&\text{for } \ 0\le \rho\le 1. 
\end{align*}
As shown in Figure~\ref{fig:gauss coamoebas}, the coamoeba for $f_1 = g_1$ has two connected components: one containing $(0,0)$ and another containing $(\pi,\pi)$. 
These yield two solutions to $H_A(\beta)$ at $c = (1,i,i,c_4)$ by Theorem~\ref{thm:gkz}, namely, $\Phi_{f_1}^{(0,0)}( s,t,c)$ and $\Phi_{f_1}^{(\pi,\pi)}( s,t,c)$. 
In addition, $(0,0)\notin\overline{\cA'_{f_\rho}}$ and 
$(\pi,\pi)\notin\overline{\cA'_{g_\rho}}$ for all $\rho$, so we   
let $\Phi_{f_\rho}^{(0,0)}( s,t,c)$ and $\Phi_{g_\rho}^{(\pi,\pi)}( s,t,c)$ 
denote the entire functions corresponding 
to $f_\rho$ and $g_\rho$, respectively. 

\begin{figure}[t]
\begin{tabular}{ccc}
\setlength{\unitlength}{1mm}
\begin{tikzpicture}(40,40)
  \put(20,20){\circle*{.75}}
  \put(20.5,21){$(0,0)$}
  \put(0,0){\line(1,0){40}} 
  \put(0,0){\line(0,1){40}} 
  \put(40,0){\line(0,1){40}} 
  \put(0,40){\line(1,0){40}} 
\end{tikzpicture}
&
\hspace{5cm}
\setlength{\unitlength}{1mm}
\begin{tikzpicture}(40,40)
 \filldraw[fill=black] (1,0) -- (1,1) -- (3,1) -- (3,0)-- cycle;
 \filldraw[fill=black] (0,1) -- (1,1) -- (1,3) -- (0,3)-- cycle;
 \filldraw[fill=black] (3,1) -- (3,3) -- (4,3) -- (4,1)-- cycle;
 \filldraw[fill=black] (1,3) -- (3,3) -- (3,4) -- (1,4)-- cycle;
 \put(20,20){\circle*{.75}}
 \put(20.5,21){$(0,0)$}
 \put(39,39){\circle*{.75}}
 \put(30.25,34.75){$(\pi,\pi)$}
\end{tikzpicture} 
&
\hspace{1cm}
\setlength{\unitlength}{1mm}
\begin{tikzpicture}(40,40)
  \put(39,39){\circle*{.75}}
  \put(30.25,34.75){$(\pi,\pi)$}
  \put(0,20){\line(1,0){40}} 
  \put(20,0){\line(0,1){40}} 
\end{tikzpicture}
\end{tabular}
\hspace*{4cm}
 \caption{The coamoebas of the polynomials $f_0$, $f_1=g_1$, and $g_0$, 
 respectively, shown inside the fundamental domain $[-\pi,\pi]\times[-\pi,\pi]$ 
 of $\TT^2$ in $\RR^2$.}
 \label{fig:gauss coamoebas}
\end{figure}
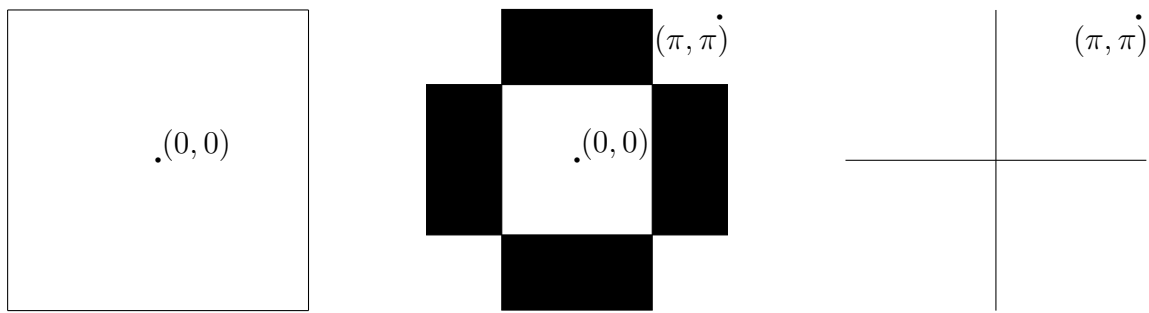

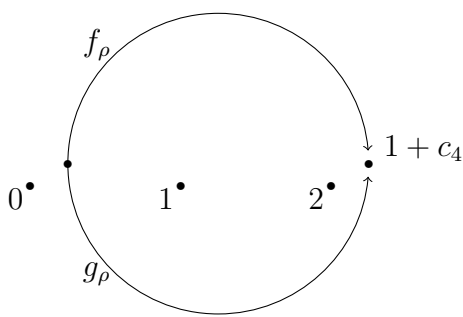
\begin{figure}[h]
\setlength{\unitlength}{1mm}
\begin{tikzpicture}(80,40)
  \draw [->] (2.5,2.3) arc (180:5:20mm);
  \draw [->] (2.5,2.3) arc (180:355:20mm);
  \put(25,23){\circle*{1}}
  \put(20,20){\circle*{1}}
  \put(17,17){$0$}
  \put(40,20){\circle*{1}}
  \put(37,17){$1$}
  \put(60,20){\circle*{1}}
  \put(57,17){$2$}
  \put(65,23){\circle*{1}}
  \put(67,24){{$1+c_4$}}
  \put(27,38){$f_{\rho}$}
  \put(27,08){$g_{\rho}$}
\end{tikzpicture}
\caption{The loop $L$ in the $(1+c_4)$-complex plane is given by the reverse of the arrow labeled $f_\rho$, followed by the arrow labeled $g_\rho$.}
\label{fig:gauss continuation}
\end{figure}

Let $L$ be the loop in the coefficient space given by first following the coefficients in the reverse of $f_\rho$ and then those in $g_\rho$. 
Since $f_1 = g_1$ and $\Phi_{g_0}^{(\pi,\pi)}( s,t,c) = e^{(s_1+s_2)\pi i} \Phi_{f_0}^{(0,0)}( s,t,c)$,  we can explicitly perform an analytic continuation of 
$\Phi_{f_1}^{(0,0)}( s,t,c)$ along the loop $L$, see Figure~\ref{fig:gauss continuation}. 
Therefore if the monodromy of $H_A(-t,-s)$ is irreducible, then 
$\Phi_{f_1}^{(0,0)}( s,t,c)$ and $\Phi_{f_1}^{(\pi,\pi)}( s,t,c)$ form a basis 
for the solution space of $H_A(-t,-s)$ of analytic germs at $(1,i,i,c_4)$. 
As the monodromy irreducibility of $H_A(-t,-s)$ is equivalent to the nonresonance of 
$\beta$~\cite{beukers-irred, sw-irred nonres}, the conclusion of Proposition~\ref{prop:indep} also holds in this case.
\end{example}

\section{Lopsided coamebas and Mellin--Barnes integrals}
\label{sec:MellinBarnes}

An alternative approach for constructing integral representations of $A$-hypergeometric functions is through  \emph{Mellin--Barnes integrals}~\cite{N, beukers-monodromy}. 
The main result of this section is Theorem~\ref{thm:EM-MB}, which identifies the set of Mellin--Barnes integrals of a system $H_A(\beta)$ with a certain subset of the set of Euler--Mellin integrals. 
This in turn implies Corollary~\ref{cor:LopsidedIntegralIndependence}, which asserts the linear independence of certain collections of meromorphic continuations of Euler--Mellin integrals at generic parameters $\beta$.  

The \emph{Gale dual} of $A$ is an integer $r\times (r-m-n)$-matrix $B$ with relatively prime maximal minors such that $AB = 0$. Typically we will not require that the condition on maximal minors holds; in this case, the matrix $B$ is called a \emph{dual matrix} of $A$. 
Given a dual matrix $B$ of $A$, let $\ZZ[B]$ denote the sublattice of $\ZZ^{r-m-n}$ generated by the rows of $B$. Also, consider the zonotope 
\[
\bZ_B := \left\{\frac{\pi}{2}\sum_{i=1}^r \mu_i b_i \ \Bigg|\ |\mu_i|<1\right\},
\]
where $b_i$ denotes the $i$th row of the matrix $B$. 
For connections between camoebas and Gale duals, see \cite{NP,lopsided}.

\begin{dfn}\label{def:MB}
Fix a Gale dual $B$ of $A$, and let $\gamma$ be such that $A\gamma = \beta$. Then for $c\in \CC^r$, the \emph{Mellin--Barnes integral} has the form
\begin{align}
\label{eqn:MB}
L(c) = L(c_1, \dots, c_r) = \int_{(i\RR)^m}\prod_{i=1}^{r} \Gamma(-\gamma_i - \<b_i, w\>)c_i^{\gamma_i + \<b_i,w\>}\,dw,
\end{align}
where $dw = dw_1\wedge \dots \wedge dw_m$. 
\end{dfn}

We say that $\beta\in\CC^{m+n}$ is \emph{totally nonresonant} for $A$ if the shifted lattice $\beta + \ZZ^{m+n}$ has empty intersection with any hyperplane spanned by any $m+n-1$ linearly independent columns of $A$.

Given $\theta\in\TT^n$ and $c\in(\CC_*)^r$, let 
\[
 L^{\theta}(c)
 := L(c_1 e^{i\<\alpha_1, \theta\>}, \dots, c_r e^{i\<\alpha_r, \theta\>}).
\]
The following Mellin--Barnes result summarizes Corollary~4.2, Theorem~3.1, and Proposition~4.3 of \cite{beukers-monodromy}. 

\begin{thm}[\cite{beukers-monodromy}] 
\label{thm:beukers}
Fix $\theta\in\TT^n$ and $c\in(\CC_*)^r$. 
\begin{enumerate}
\vspace{-1ex}
\item 
	If $\Arg(c_\theta)B \in \inter(\bZ_B)$, 
	then the integral $L^\theta(c)$ converges 
	absolutely. 
\item 
	If $\Arg(c_\theta)B \in \inter(\bZ_B)$ 
	and $\gamma_i<0$ for each $i$, 
	then $L^\theta(c)$ satisfies the 
	$A$-hypergeometric system $H_A(\beta)$  
	as a germ of an analytic function at  
	$c$.
\item \label{item3}
	Suppose that $\beta$ is 
	totally nonresonant for $A$ and 
	$\theta_1, \dots, \theta_k\in\TT^n$ are 
	such that the 
	$(r-m-n)$-tuples 
	$\Arg(c_{\theta_1})B, \dots, 
	\Arg(c_{\theta_k})B$ are distinct and 
	contained in $\inter(\bZ_B)$. Then 
	as germs at $c$, the 
	Mellin--Barnes integrals 
	$L^{\theta_1}(c)$, \dots, 
	$L^{\theta_k}(c)$ 
	are linearly independent. 
\end{enumerate}
\end{thm}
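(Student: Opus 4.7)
The plan is to prove the three parts of Theorem~\ref{thm:beukers} separately. Parts (1) and (2) are standard applications of Stirling's asymptotics and contour manipulation, respectively, while part (3) is the technical heart of the argument.

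For part (1), I would parametrize the contour by $w = it$ and apply Stirling's formula
\[
|\Gamma(x + iy)| \;\sim\; \sqrt{2\pi}\,|y|^{x - 1/2}\,e^{-\pi|y|/2}, \qquad |y|\to\infty,
\]
to obtain $|\Gamma(-\gamma_i - i\<b_i,t\>)|\asymp |\<b_i,t\>|^{-\Re\gamma_i - 1/2}e^{-(\pi/2)|\<b_i,t\>|}$. After the rotation $c_j\mapsto c_j e^{i\<\alpha_j,\theta\>}$, the modulus of the exponential factor is $|c_j|^{\Re\gamma_j}e^{-\arg(c_{\theta,j})\<b_j,t\>}$, so absolute convergence reduces to the inequality
\[
\sum_j \arg(c_{\theta,j})\<b_j,t\> \;<\; \frac{\pi}{2}\sum_j |\<b_j,t\>| \qquad \text{for all } t\neq 0.
\]
A standard support-function calculation then identifies this requirement with the condition $\Arg(c_\theta)B \in \inter(\bZ_B)$.

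For part (2), the Euler equations are automatic: applying $E_i = \sum_j a_{ij}c_j\partial_{c_j}$ to the integrand multiplies it by $\sum_j a_{ij}(\gamma_j + \<b_j,w\>) = (A\gamma)_i + (AB\,w)_i = \beta_i$, using $A\gamma = \beta$ and $AB = 0$. For a lattice vector $u = u_+ - u_-$ with $Au = 0$, I would apply $(\partial/\partial c)^{u_+}$ under the integral sign, use $\Gamma(x+1) = x\Gamma(x)$ to absorb the polynomial factors produced by differentiation into shifts of the arguments of the Gamma functions, and then translate the contour $w \mapsto w - v$ by a vector $v$ with $Bv$ equal to the needed shift, thereby recovering $(\partial/\partial c)^{u_-}$ applied to the original integrand. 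The negativity hypothesis $\gamma_i < 0$ places the poles of each $\Gamma(-\gamma_i - \<b_i,w\>)$ strictly to the right of the contour in the coordinate $\<b_i,w\>$, so no residues are crossed during the translation.

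The main obstacle is part (3). The plan is to represent each $L^{\theta_j}(c)$ as a residue series by closing the $w$-contour into a polyhedral cone chosen so that one selected family of pole hyperplanes $-\gamma_i - \<b_i,w\> \in \NN$ is enclosed. Total nonresonance of $\beta$ ensures that the enclosed poles are simple and that the residue at each lattice point $k$ is nonzero. Under the rotation by $\theta_j$, each such residue acquires a phase $e^{i\<k,\Arg(c_{\theta_j})B\>}$ multiplying a term common to all of the $L^{\theta_j}$. A hypothetical linear dependence $\sum_j \lambda_j L^{\theta_j}(c) \equiv 0$ would therefore force the character identity
\[
\sum_j \lambda_j\, e^{i\<k,\Arg(c_{\theta_j})B\>} \;=\; 0 \qquad \text{for all } k \text{ in the appropriate lattice,}
\]
and since the tuples $\Arg(c_{\theta_j})B$ are pairwise distinct in $\inter(\bZ_B)$, linear independence of characters of $\ZZ[B]$ forces $\lambda_j = 0$ for every $j$. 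The delicate step is verifying that a single residue expansion captures enough asymptotic information across all of the different rotations; when it does not, one combines expansions from several compatible cones and invokes total nonresonance to preclude the remaining cancellations.
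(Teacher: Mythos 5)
This result is stated in the paper purely as a citation: the authors explicitly say it ``summarizes Corollary~4.2, Theorem~3.1, and Proposition~4.3 of \cite{beukers-monodromy}'' and supply no proof of their own, so there is no internal argument to compare against. Your sketches for parts (1) and (2) do track the standard Mellin--Barnes mechanics: Stirling asymptotics reduce absolute convergence to the support-function inequality $\bigl|\sum_i \arg(c_{\theta,i})\<b_i,t\>\bigr| < \frac{\pi}{2}\sum_i|\<b_i,t\>|$, which is exactly $\Arg(c_\theta)B\in\inter(\bZ_B)$; and for the toric operators $\square_u$, using $z\Gamma(z)=\Gamma(z+1)$ to turn $(\partial/\partial c)^{u_\pm}$ into the shift $\gamma\mapsto\gamma-u_\pm$, then translating the contour by a real vector $v$ with $\<b_i,v\>$ matching the needed shift, with $\gamma_i<0$ guaranteeing that the deformation path $\{tv+(i\RR)^{r-m-n}\}_{0\le t\le 1}$ stays left of every pole. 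Both outlines are fine modulo the routine uniform estimates that justify differentiating under the integral.

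Part (3) is where the plan has a real gap. Closing a multidimensional Mellin--Barnes contour ``into a polyhedral cone'' is not an elementary residue computation: one has to fix a regular triangulation (equivalently a compatible cone of pole hyperplanes), verify that the resulting iterated residue sum converges, and recognize the result as a $\Gamma$-series solution of $H_A(\beta)$ attached to that simplex. The sum over residues for a single cone gives one number (one $\Gamma$-series value, up to a $\theta$-dependent prefactor), not a free family of equations indexed by lattice points $k$; so a hypothetical dependence $\sum_j\lambda_j L^{\theta_j}(c)\equiv 0$ does not directly collapse to the pointwise character identity $\sum_j\lambda_j e^{i\<k,\rho_j\>}=0$ for every $k$. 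What is actually used in \cite{beukers-monodromy} is that each $L^{\theta_j}$ decomposes as a fixed linear combination of the $\vol(A)$-many $\Gamma$-series, with coefficients that are (generalized Vandermonde) functions of the distinct $\rho_j=\Arg(c_{\theta_j})B\in\inter(\bZ_B)$; total nonresonance enters twice, to make the $\Gamma$-series themselves linearly independent and to keep the relevant residues and hence coefficients nonzero, after which invertibility of that coefficient matrix gives independence of the $L^{\theta_j}$. Your own final caveat — that ``a single residue expansion'' may not suffice and several cones must be combined — is precisely where the argument is incomplete; the $\Gamma$-series decomposition is the missing structural ingredient that resolves it.
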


It follows from results in \cite{lopsided} that each 
$(r-m-n)$-tuple 
$\Arg(c_{\theta_i})B$ is contained in the translated lattice $\Arg(c)B + 2\pi \ZZ[B]$. Thus for each set of points $\inter(\bZ_B)\cap (\Arg(c)B + 2\pi \ZZ[B])$, there is an associated set of linearly independent solutions to $H_A(\beta)$, given by the corresponding Mellin--Barnes integrals.

To relate Mellin--Barnes integrals to Euler--Mellin integrals, we first recall the definition of the lopsided coamoeba of a polynomial $f(z)$. 
The polynomial
$F(c,z) := \sum_{\alpha\in A} c_\alpha z^\alpha$, 
with coefficients $c$ also viewed as variables,  has coamoeba $\cA'_F$ contained in $\TT^{n+r}$. 
Given $f(z)=\sum_{\alpha\in A} c_\alpha z^\alpha$ with fixed coefficients $c$, 
the \emph{lopsided coamoeba} of $f$, 
denoted by $\cD_f$, is the intersection of $\cA'_F$ with the sub-$\TT^n$-torus of $\TT^{n+r}$ obtained by fixing $\Arg(c)$ as prescribed by $f$. The lopsided coamoeba $\cD_f$ is viewed as a subset of $\TT^n$.

There are several ways to define the lopsided coamoeba; for a more exhaustive treatment, we refer the reader to~\cite{lopsided}. 
The name ``lopsided coamoeba" can be misleading; the lopsided coamoeba is not a actually a coamoeba itself, but it can be viewed as a crude approximation of the coamoeba. 
See Figure~\ref{fig:Lopsided} for a comparison. 
There is a natural inclusion $\cA'_f\subseteq \cD_f$, and in particular, each component of 
$\TT^n\setminus\overline{\cD_f}$
is contained in a component of 
$\TT^n\setminus\overline{\cA'_f}$.
The induced map on components of the closures of these coamoebas is injective but in general, is not surjective. 

\begin{figure}
\centering
\includegraphics[width=40mm]{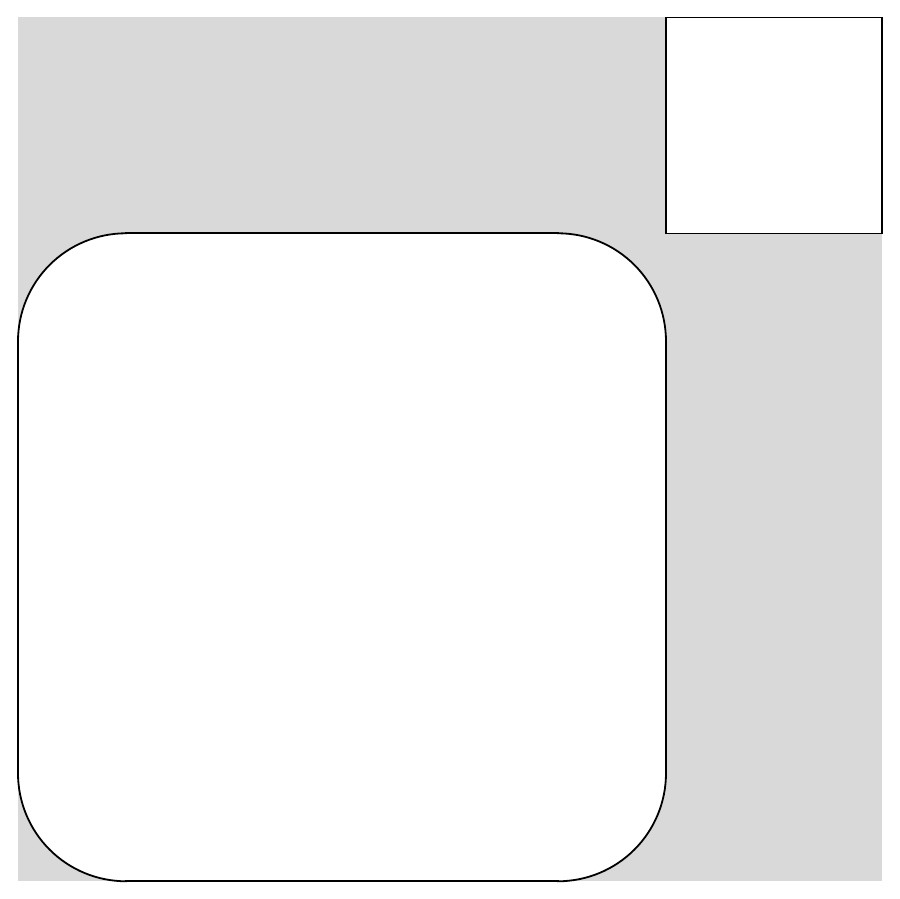}
\hspace{1cm}
\includegraphics[width=40mm]{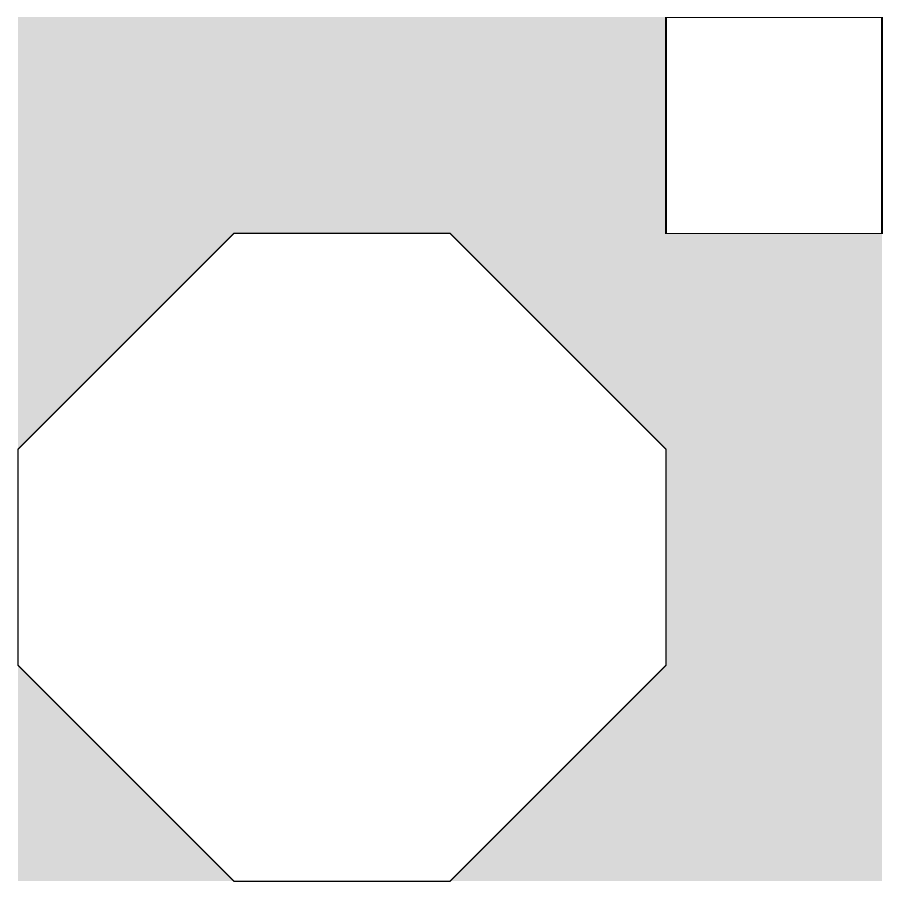}
\vspace{-4mm}
\caption{The coamoeba and the lopsided coamoeba of $f(z_1, z_2) = 1 + z_1 + z_2 + i z_1 z_2$.}
\label{fig:Lopsided}
\end{figure}

\begin{remark}\label{rem:lopsided}
The main result of~\cite{lopsided} states that the set of connected components of 
the complement of $\overline{\cD_f}$ 
is equipped with an \emph{order map}. 
Fix $\alpha=(\alpha_1, \dots, \alpha_m)\in \bigoplus_{j=1}^m \supp(f_j)$, and 
let $\arg_\pi$ denote the principal branch of the argument mapping.  
Denote by 
$\left[
	\arg_\pi\left(
		\frac{c_{\alpha_{jk}} 
		e^{i\<\alpha_{jk},\theta\>}}{
		c_{\alpha_j}  e^{i\<\alpha_j,\theta\>}}
	\right)
\right]$ the row vector of length $r$ with entries as shown, indexed by $(j,k)$ in lexicographic order. This indexing corresponds to the labeling of the columns of $A$, as shown in~\eqref{eq:A}. 
Define the function $v_\alpha\colon\RR^n\to\RR^{r-m-n}$ to be 
\[
v_\alpha(\theta) :=  \left[
	\arg_\pi\left(
		\frac{c_{\alpha_{jk}} 
		e^{i\<\alpha_{jk},\theta\>}}{
		c_{\alpha_j}  e^{i\<\alpha_j,\theta\>}}
	\right)
\right]B.
\]
The properties of this function is given in detail in \cite[Thm. 4.1 and Prop. 4.4-5]{lopsided} and in \cite[Thm. 2.3.10]{jensthesis}.
There it is shown that $v_\alpha$ is well-defined on $\TT^n$; and furthermore, it is constant on each component of $\TT^n\setminus\overline{\cD_f}$. In fact, the map from the set of connected components of $\TT^n\setminus\overline{\cD_f}$ to $\inter(\bZ_B)\cap (\Arg(c)B + 2\pi \ZZ[B])$ given by 
\[
\Theta \mapsto v_\alpha(\theta) \ \, \text{for some representative } \theta \in \Theta
\]
is surjective and independent of choice of $\alpha\in A$. If the maximal minors of $A$ are relatively prime, then this map is also injective. 
\end{remark}

We are now prepared to approach the first result of this section, which states explicitly how the order map for the set of components of the complement of the closure of the lopsided coamoeba lifts to a bijection between the set of Mellin--Barnes integrals arising from the points in the translated lattice $\Arg(c)B + 2\pi \ZZ[B]$ and the set of Euler--Mellin integrals arising from the components of the complement of the lopsided coamoeba of the polynomial with coefficients $c$.
To show this, we consider a dual matrix $B$ �of $A$ with a special form. 

\begin{convention}\label{conv:dual}
Without loss of generality, we may assume that $A$ is of the form
\[
A = \left[\begin{array}{ccc}
1 & 1 & 1 \\
 0 & A_\rI & A_\rII
 \end{array}\right],
\]
where $A_\rI$ is a nonsingular $n\times n$-matrix. Let $B$ denote the dual matrix of the form 
\[
B = \left[\begin{array}{c}
-a_0\\
A_\rI^{-1}A_\rII\\
-I_m
\end{array}\right]D,
\]
where $a_0$ is defined by the property that  each column sum of $B$ is zero and $D$ is an integer diagonal matrix chosen so that $B$ becomes an integer matrix. 
Let $g_A$ (respectively, $g_B$) denote the greatest common divisor of the maximal minors of the matrix $A$ (respectively, $B$).
\end{convention}

\begin{lem}\label{lem:EM-MB}
Following Convention~\ref{conv:dual}, there is an equality 
\[
\frac{g_B}{g_A} = \frac{|\det(D)|}{|\det(A_\rI)|}.
\]
\end{lem}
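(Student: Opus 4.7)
The plan is to express $g_A$ and $g_B$ as lattice indices built from the common rational matrix $M := A_\rI^{-1}A_\rII \in \mathbb{Q}^{n\times k}$, with $k := r-m-n$, and then divide. I will use the identity $g_X = [\ZZ^p : X\ZZ^q]$ for any full-row-rank integer matrix $X\in\ZZ^{p\times q}$; applied to $B^T$ this gives $g_B = [\ZZ^k : \ZZ[B]]$, which matches the paper's row-lattice convention for $\ZZ[B]$.

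For $g_A$, I would first apply a unimodular column operation to $A$. After the rearrangement of Convention~\ref{conv:dual} and, if needed, a further permutation so that the first $m$ columns of $A$ give $I_m$ in the top $m$ rows, right-multiplication by the block-triangular unimodular matrix $U = \left[\begin{smallmatrix} I_m & -C_2 & -C_3 \\ 0 & I_n & 0 \\ 0 & 0 & I_k\end{smallmatrix}\right]$ (with $C_2, C_3$ the Cayley portions of the middle and right column blocks) yields $A' := AU = \left[\begin{smallmatrix} I_m & 0 & 0 \\ 0 & A_\rI & A_\rII\end{smallmatrix}\right]$. The maximal minors of the block-diagonal $A'$ factor, so $g_A = g_{A'} = g([A_\rI \mid A_\rII]) = [\ZZ^n : A_\rI\ZZ^n + A_\rII\ZZ^k]$. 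Writing $A_\rI\ZZ^n + A_\rII\ZZ^k = A_\rI\Lambda'$ for $\Lambda' := \ZZ^n + M\ZZ^k \subseteq \mathbb{Q}^n$ and using the index chain $A_\rI\Lambda' \subseteq \ZZ^n \subseteq \Lambda'$ together with $[\Lambda' : A_\rI\Lambda'] = |\det A_\rI|$, I obtain
\[
g_A \; = \; \frac{|\det A_\rI|}{[\Lambda':\ZZ^n]}.
\]

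For $g_B$, I would decompose $B = ND$ with $N := \left[\begin{smallmatrix} -a_0 \\ M \\ -I_k\end{smallmatrix}\right]$, so that the row lattice of $B$ equals $RD$, where $R \subseteq \mathbb{Q}^k$ denotes the $\ZZ$-span of the rows of $N$. The rows of $-I_k$ force $R \supseteq \ZZ^k$; reading $AB=0$ row by row and using the Cayley structure of $A$, each row of $a_0$ equals the sum of those rows of $M$ coming from the corresponding polynomial minus a $0/1$ indicator vector in $\ZZ^k$, so rows of $a_0$ add nothing modulo $\ZZ^k$. Hence $R = \ZZ^k + R_M$, where $R_M$ is the $\ZZ$-span of the rows of $M$, and
\[
g_B \; = \; [\ZZ^k : RD] \; = \; \frac{|\det D|}{[R:\ZZ^k]}.
\]

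To conclude I would identify $[R:\ZZ^k] = [\Lambda':\ZZ^n]$: these are the orders of the images of $M^T : \ZZ^n \to \mathbb{Q}^k/\ZZ^k$ and $M : \ZZ^k \to \mathbb{Q}^n/\ZZ^n$, respectively, and both evaluate to $\prod_j d/\gcd(s_j,d)$, where $s_j/d$ runs over the nonzero diagonal entries in a Smith decomposition of $M$. Dividing the two expressions cancels this common factor and yields $g_B/g_A = |\det D|/|\det A_\rI|$. The main obstacles I anticipate are (i) verifying that the rows of $a_0$ are indeed redundant modulo $\ZZ^k$, which is a careful row-by-row unpacking of $AB=0$ against the Cayley block structure of $A$, and (ii) establishing the transpose symmetry $[R:\ZZ^k] = [\Lambda':\ZZ^n]$ via the Smith normal form of the rational matrix $M$.
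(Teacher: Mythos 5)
Your proposal is correct in outline and takes a genuinely different route from the paper. The paper's proof is short and structural: it first reduces (via a cited result of Nilsson) to the case $g_A=1$, which is equivalent to $A$ extending to an $r\times r$ unimodular matrix $\widetilde A$; the last $k:=r-m-n$ columns of $\widetilde A^{-1}$ then furnish a Gale dual $\widetilde B$ with $g_{\widetilde B}=1$, the Schur complement identity gives $|\det A_\rI|=|\det\widetilde B_2|$ for the bottom $k\times k$ block, and comparing the Convention's $B=\widetilde B T$ against $\widetilde B$ (using that the bottom block of $B$ is $-D$) yields $|\det T|=|\det D|/|\det A_\rI|$ and hence the claim. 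Your argument, by contrast, never invokes unimodular extensions or the Schur complement: it interprets $g_A$ and $g_B$ directly as lattice indices, performs a unimodular column reduction of $A$ to isolate $[A_\rI\mid A_\rII]$, and then compares the two indices through the intermediate lattices $\Lambda'=\ZZ^n+M\ZZ^k$ and $R=\ZZ^k+M^T\ZZ^n$ built from $M=A_\rI^{-1}A_\rII$, reducing everything to the ``transpose symmetry'' $[\Lambda':\ZZ^n]=[R:\ZZ^k]$ via rational Smith normal form. The trade-off is that the paper's route is shorter but hides the bookkeeping behind the reduction to $g_A=1$ and the citation to Nilsson, while yours is more elementary and self-contained but longer and requires two lemmas you correctly flag as the main work: the redundancy of the $a_0$-rows modulo $\ZZ^k+R_M$ (this follows from unpacking the Cayley block of $AB=0$ row by row, as you indicate), and the transpose-index identity (which does hold: any unimodular $U,V$ preserve both indices, so one may diagonalize $M$ over $\mathbb{Q}$, after which both sides are $\prod_j d/\gcd(s_j,d)$ for the rational diagonal entries $s_j/d$). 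One small caveat: your derivation $g_B=|\det D|/[R:\ZZ^k]$ uses the chain $RD\subseteq\ZZ^k\subseteq R$ and $[R:RD]=|\det D|$; the inclusion $RD\subseteq\ZZ^k$ is indeed guaranteed because $RD=\ZZ[B]$ and $B$ is chosen integral, so this step is sound but worth stating explicitly.
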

\begin{proof}
We may assume that $g_A = 1$. 
Following~\cite[Prop. 4.2]{N}, this equality holds precisely when $A$ can be extended to a $r\times r$ unimodular matrix 
\[
\widetilde A = \left[\begin{array}{ccc}
	1 & 1 & 1\\
	0 & A_\rI & A_\rII\\
	* & * & *
    \end{array}\right]
\ \text{with inverse} \ \, 
 {\widetilde A}^{-1} 
= \left[
	\begin{array}{cc} * & \widetilde B
	\end{array}
	\right]
= \left[
	\begin{array}{cc}
	* & \tilde b_0 \\
	* & {\widetilde B}_1\\
	* & {\widetilde B}_2
    \end{array}\right].
\]
It follows that $\widetilde B$ is a Gale dual of $A$, and by the Schur complement formula, $|A_\rI| = |{\widetilde B}_2|$. As $B = {\widetilde B} T$ for some affine transformation $T$, the desired equality follows.
\end{proof}

\begin{thm}\label{thm:EM-MB}
For each $\theta\in \TT^n\setminus\overline{\cD_f}$, there is an equality 
$g_B\,L^\theta(c)=2\pi i\,e^{-i\<s,\theta\>}\,\Gamma(t)\,g_A\,M_f^\theta (c)$.
\end{thm}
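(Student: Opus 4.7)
My plan is to derive both sides from a common $r$-fold integral obtained by applying the Cahen--Mellin identity
\[
\frac{1}{f_j(z)^{t_j}} \;=\; \frac{1}{\Gamma(t_j)} \int_0^\infty u_j^{t_j-1}\, e^{-u_j f_j(z)}\, du_j
\]
to each factor of $f(z)^{-t}$ in the definition of $M_f^\theta(c)$. Interchanging the orders of integration (legitimate because of the exponential bound \eqref{eq:bound} underlying Theorem~\ref{thm:expgro}) converts $\Gamma(t)\,M_f^\theta(c)$ into an $(n+m)$-fold integral over $(x,u)\in\RR^n\times\RR_+^m$, with $z=e^{x+i\theta}$, of $\exp\bigl(-\sum_{j,k}u_j c_{jk}z^{\alpha_{jk}}\bigr)$ weighted by appropriate monomial factors. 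The parametrization $z=e^{x+i\theta}$ immediately extracts the prefactor $e^{i\<s,\theta\>}$, accounting for the $e^{-i\<s,\theta\>}$ on the right-hand side after cross-multiplication.

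I would then introduce new variables $y_{jk}=u_j c_{jk}\,e^{i\<\alpha_{jk},\theta\>}\,e^{\<\alpha_{jk},x\>}$, one for each column of $A$. The hypothesis $\theta\notin\overline{\cD_f}$, together with the characterization of the lopsided coamoeba in terms of the principal branch of $\Arg$ recalled in Remark~\ref{rem:lopsided}, ensures that these $y_{jk}$ can be chosen coherently in $\RR_+$. Their logarithms lie in the $(n+m)$-dimensional affine image of $A^\top$, so I parametrize the remaining $r-m-n$ degrees of freedom by the column span of $B$: introduce a complex variable $w\in\CC^{r-m-n}$ such that $\log y=A^\top(x,\log u)+Bw+\text{const}$. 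The Jacobian of the resulting change of variables $(x,u,w)\leftrightarrow y$ is a blockwise determinant involving $|\det A_\rI|$ and $|\det D|$, and by Lemma~\ref{lem:EM-MB} its absolute value equals $g_A/g_B$.

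After the substitution the integrand separates: the $n+m$ bound directions contribute $\prod_i y_i^{\gamma_i+\<b_i,w\>}e^{-y_i}\,dy_i/y_i$, integrating to $\prod_i\Gamma(-\gamma_i-\<b_i,w\>)$, while the $c_i$ enter only through the constant shift $\log c_i$, giving $\prod_i c_i^{\gamma_i+\<b_i,w\>}$. This reproduces the Mellin--Barnes integrand; the remaining $w$-integration, initially over a real slice, is then Wick-rotated to $(i\RR)^{r-m-n}$ to match~\eqref{eqn:MB}, and the factor $2\pi i$ in the target identity arises from this rotation (together with standard Mellin--Barnes normalization conventions).

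The main technical obstacle is justifying the Wick rotation of the $w$-contour and the interchange of integrations: both are licit precisely in the region $\Arg(c_\theta)B\in\inter(\bZ_B)$ of absolute convergence guaranteed by Theorem~\ref{thm:beukers}(1), and by the order map of Remark~\ref{rem:lopsided} this region is exactly $\TT^n\setminus\overline{\cD_f}$, which is the hypothesis of the theorem. Once convergence is controlled, the identity reduces to bookkeeping of the Jacobian $g_A/g_B$, the rotation factor $e^{-i\<s,\theta\>}$, the Cahen--Mellin factor $\Gamma(t)$, and the contour normalization $2\pi i$.
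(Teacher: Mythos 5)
Your high-level strategy is recognizably correct: both sides can be derived from a common $r$-fold integral of $\prod_i y_i^{*} e^{-y_i}$, and the structural ingredients — the exponential factor $e^{-i\<s,\theta\>}$, the $\Gamma(t)$ from regularizing $f^{-t}$, and Lemma~\ref{lem:EM-MB} for the $g_A/g_B$ ratio — all appear for the right reasons. But the central step of the argument has a genuine gap. You write $y_{jk} = u_j c_{jk} e^{i\<\alpha_{jk},\theta\>} e^{\<\alpha_{jk},x\>}$ and then propose a ``change of variables $(x,u,w)\leftrightarrow y$'' with the $w$-block furnishing the remaining $r-m-n$ degrees of freedom. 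That cannot be a change of variables in the integral you are holding: the Euler--Mellin side of the identity, after Cahen--Mellin, is an $(n+m)$-fold integral in $(x,u)$ with no $w$-integration anywhere in sight, and the $y$ you define are \emph{constrained} to the $(n+m)$-dimensional affine slice $\log y \in A^\top\RR^{n+m}+\log c$. To turn this into an $r$-fold integral plus a $w$-contour you must \emph{insert} a Fourier/Mellin delta-function representation of the constraint $B^\top(\log y - \log c)=0$, and that insertion — not a ``Wick rotation'' — is precisely where the factor $2\pi i$ and the contour $(i\RR)^{r-m-n}$ actually come from. As written, your step labeled ``After the substitution the integrand separates'' already has $w$ appearing in the exponents $\gamma_i+\<b_i,w\>$ before any integral in $w$ has been introduced, so the bookkeeping does not close.

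The paper avoids this issue by working one level up: after normalizing the first $n+1$ coefficients by a monomial change of $z$-variables (extracting exactly the prefactor $c_0^{|s|-t}c_\rI^{-s}e^{-i\<A_\rI s,\theta\>}$), it regards the Euler--Mellin integral as a function $M_f^\theta(x)$ of the Gale-dual variables $x_i=c^{B_i}$, computes its \emph{Mellin transform in $x$} explicitly using the simplex evaluation of Example~\ref{ex:MellinLinearPoly}, and observes that this transform is exactly the integrand of the Mellin--Barnes integral. The identity is then one application of the Mellin inversion formula, which is where $2\pi i$ legitimately appears, with convergence supplied by the bound \eqref{eq:bound}. Your delta-function approach is the Fourier-dual formulation of the same inversion, so it can certainly be made to work; but you would need to (a) explicitly insert the plane-wave representation $\int_{(i\RR)^{r-m-n}} e^{\<B^\top\xi,w\>}dw$ of the constraint, (b) justify the contour placement via the convergence region $\Arg(c_\theta)B\in\inter(\bZ_B)$ as you indicate, and (c) actually compute the Jacobian $\bigl|\det[A^\top\,|\,B]\bigr|$ rather than assert it ``involves'' $|\det A_\rI|$ and $|\det D|$ — the paper reduces this to Lemma~\ref{lem:EM-MB} through a unimodular extension of $A$, which is not quite automatic. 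Until those steps are filled in, the argument does not yet prove the stated equality.
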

\begin{proof}
We will give the proof after rewriting the homogeneity parameter as
$\beta = (-t, -A_\rI s)$. 
In this case, the Euler--Mellin integral is 
\begin{align}
M_f^\theta(c) 
& = \int_{\Arg^{-1}(\theta)}
	\frac{z^{A_\rI s}}{\left( 
	c_0 + c_1z^{\alpha_1} 
	+ \dots + c_n z^{\alpha_n} + c_{1+n}	
	z^{\alpha_{1+n}} + \dots + c_{m+n}
	z^{\alpha_{m+n}} \right)^t} 
	\, \frac{dz}{z}
	\nonumber \\
& = \frac{c_0^{|s|-t}}{{c_\rI}^s} 
	\int_{\Arg^{-1}(\tilde\theta)}
	\frac{z^{A_\rI s}}{\left(
	1 + {z}^{\alpha_1} 
	+ \dots + {z}^{\alpha_n} + 
	x_{1}^{1/{d_1}}
	{z}^{\alpha_{1+n}} + \dots + 
	x_{m}^{1/{d_m}}
	{z}^{\alpha_{m+n}} \right)^t} 
	\, \frac{d{z}}{{z}},
	\label{eq:Mtransf}
\end{align}
where $x_i = c^{B_{i}}$ and $B_i$ denotes the $i$th column of $B$. 
Denote by $M_f^\theta(x)$ the function given by the integral in~\eqref{eq:Mtransf}, and let $\phi = \Arg(x)$. 
Note that $\theta\in \TT^n\setminus\overline{\cD_f}$ is equivalent to 
$\theta \in \TT^{m+n}\setminus\overline{\cA'_F}$, which in turn is equivalent to 
the convergence of the integral 
\begin{align*}
\int_{\Arg^{-1}(\tilde \theta, \phi)}\frac{z^{A_\rI s} x^w}{\left(
	1 +  z^{\alpha_1} + \dots + z^{\alpha_n} 
	+ x_{1}^{1/{d_1}}z^{\alpha_{1+n}} 
	+ \dots + x_{m}^{1/{d_m}}z^{\alpha_{m+n}}
	\right)^t} \, \frac{dz\wedge dx}{z\,x}.
\end{align*}
However, this integral is precisely the Mellin transform with respect to $x$ of $M_f^\theta(x)$. Consequently,  
\begin{align*}
\{\cM\,M_f^{\theta}(x)(w)\}& = \frac{|\det(D)|}{|\det(A_\rI)|}\int_{\Arg^{-1}(\tilde \theta, \tilde \phi)}\frac{z^{s-A_\rI^{-1}A_2Dw} v^{Dw}}{(1 + z_1 + \dots + z_n + x_{1} + \dots + x_{m})^t} \, \frac{dz\wedge dx}{z\,x}\\
& = \frac{|\det(D)|}{|\det(A_\rI)|}\frac{\Gamma(s-A_\rI^{-1}A_\rII Dw)\Gamma(Dw)\Gamma(t-|Dw|-|s|+|A_\rI^{-1}A_\rII Dw|)}{\Gamma(t)},
\end{align*}
see Example~\ref{ex:MellinLinearPoly}. Turning to the Mellin--Barnes integral, fix parameters $\gamma_\rI = -s -A_\rI^{-1}A_\rII \gamma_\rII$ and $\gamma_0 = |s|-t+\<b_0, \gamma_\rII\>$. Assuming we have that $s_j>0$ for all $j$, that $t>|s|$, and that $-1 \gg\gamma_\rII>0$, it follows that $\gamma_k < 0$ for all $k$. Furthermore, with $a_i$ denoting the $(i+1)$-st column of $A$, 
\[
\sum_{i=0}^{r-1} \gamma_i a_i
= A\gamma = \left[\begin{array}{c}-t\\-A_\rI s\end{array}\right].
\]
For ease of display, set 
$\Psi := t-|Dw|-|s|+|A_\rI^{-1}A_\rII Dw|$. 
Then 
\begin{align*}
L^\theta(c) 
& = 
\int_{(i\RR)^m}\Gamma(
\Psi
)\Gamma(s - A_\rI^{-1}A_\rII Dw)\Gamma(Dw)
 \cdot e^{-i\<A_\rI s, \theta\>} c_0^{
-\Psi
}c_\rI^{-s + A_\rI^{-1}A_\rII Dw}c_\rII^{-Dw}\, dw\\
& = \frac{c_0^{|s|-t}}{e^{i\<A_\rI s, \theta\>}c_\rI^{s}} \int_{(i\RR)^m}\Gamma(
\Psi
)\Gamma(s - A_\rI^{-1}A_\rII Dw)\Gamma(Dw)\,\frac{dw}{x^w}.
\end{align*}
The bounds in the proof of Theorem~\ref{thm:expgro} implies that we can apply the Mellin inversion formula, which yields the equality
\[
|\det(D)|\,L^\theta(c)=2\pi i\,e^{-i\<A_\rI s,\theta\>}\Gamma(t)\,|\det(A_\rI)|\,M_f^\theta (c).
\]
Combining this with Lemma~\ref{lem:EM-MB} provides 
the desired result. 
\end{proof}

\begin{cor}\label{cor:LopsidedIntegralIndependence}
If $\beta$ is totally nonresonant for $A$, then, when viewed as analytic germs at some $c\in\CC^r\setminus\Sigma_A$, the $A$-hypergeometric functions given by meromorphic continuations of Euler--Mellin integrals arising from the distinct components of 
$\TT^n\setminus \overline{\cD_f}$  
are linearly independent.
\end{cor}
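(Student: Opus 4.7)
The plan is to reduce the linear independence of the Euler--Mellin integrals to that of Mellin--Barnes integrals using Theorem~\ref{thm:EM-MB}, then invoke the linear independence statement in Theorem~\ref{thm:beukers}. Let $\Theta_1,\dots,\Theta_k$ be the distinct components of $\TT^n\setminus\overline{\cD_f}$, and choose a representative $\theta_i\in\Theta_i$ for each $i$. Since $\theta_i\notin\overline{\cD_f}$, Theorem~\ref{thm:EM-MB} applies and identifies each Euler--Mellin integral $M_f^{\Theta_i}(c)$ with the Mellin--Barnes integral $L^{\theta_i}(c)$ up to the nonvanishing scalar $g_B/(2\pi i\,e^{-i\<s,\theta_i\>}\,\Gamma(t)\,g_A)$. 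Hence any nontrivial linear relation among the $M_f^{\Theta_i}(c)$ transports to a nontrivial linear relation among the $L^{\theta_i}(c)$, and conversely.

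To apply Theorem~\ref{thm:beukers} to the $L^{\theta_i}(c)$, I would verify that (i) each $\Arg(c_{\theta_i})B\in\inter(\bZ_B)$, and (ii) the tuples $\Arg(c_{\theta_i})B$ for $i=1,\dots,k$ are pairwise distinct. Claim (i) follows because $\theta_i\notin\overline{\cD_f}$ is precisely the Mellin-transform convergence condition appearing in the proof of Theorem~\ref{thm:EM-MB}, which places the image inside $\inter(\bZ_B)$. For (ii), I would invoke the order map of Remark~\ref{rem:lopsided}: it sends each $\Theta_i$ to a point of $\inter(\bZ_B)\cap(\Arg(c)B+2\pi\ZZ[B])$ via $\theta\mapsto v_\alpha(\theta)$, and by the explicit formula for $v_\alpha$, this image agrees with $\Arg(c_{\theta_i})B$ modulo the relevant lattice. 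Distinct components thus produce distinct tuples.

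With (i) and (ii) in hand, total nonresonance of $\beta$ together with Theorem~\ref{thm:beukers} gives the linear independence of $L^{\theta_1}(c),\dots,L^{\theta_k}(c)$ as germs at $c$. Rescaling by the nonzero factors from Theorem~\ref{thm:EM-MB} then yields the desired linear independence of $M_f^{\Theta_1}(c),\dots,M_f^{\Theta_k}(c)$, and hence of their meromorphic continuations.

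The main technical obstacle I anticipate is step (ii) when the maximal minors of $A$ are not coprime, since Remark~\ref{rem:lopsided} asserts injectivity of the order map only in that setting. I would handle this by appealing to Convention~\ref{conv:dual}: after rescaling $B$ by the diagonal matrix $D$ so that the normalized dual matrix has coprime maximal minors, the order map becomes injective, and Lemma~\ref{lem:EM-MB} shows that this rescaling is exactly absorbed by the factor $g_B/g_A$ in Theorem~\ref{thm:EM-MB}. Thus the argument goes through uniformly regardless of the arithmetic of the minors of $A$.
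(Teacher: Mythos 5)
Your proof takes the same route as the paper's: translate a vanishing linear combination of Euler--Mellin integrals into one of Mellin--Barnes integrals via Theorem~\ref{thm:EM-MB}, then invoke Theorem~\ref{thm:beukers}.\ref{item3} and push the conclusion back. In fact you are \emph{more} careful than the paper's one-line argument, which simply cites Theorem~\ref{thm:beukers}.\ref{item3} without spelling out why its hypotheses (the tuples $\Arg(c_{\theta_i})B$ being distinct and lying in $\inter(\bZ_B)$) hold; your verification of (i) via the convergence condition in Theorem~\ref{thm:EM-MB}/Theorem~\ref{thm:beukers}.(1), and of (ii) via the order map of Remark~\ref{rem:lopsided}, is exactly the content that the paper leaves implicit.

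Where your argument stumbles is the final paragraph. Remark~\ref{rem:lopsided} ties the \emph{injectivity} of the order map (and hence the distinctness you need in (ii)) to the maximal minors of $A$ being relatively prime — not to any property of $B$. Rescaling $B$ by the diagonal matrix $D$ from Convention~\ref{conv:dual} changes $g_B$, $\bZ_B$, and $\ZZ[B]$, but it does nothing to the arithmetic of the minors of $A$, so it cannot restore injectivity when $g_A > 1$. Lemma~\ref{lem:EM-MB} only tracks how $g_B/g_A$ scales with $\det(D)$; it does not convert a non-injective order map into an injective one. To be fair, the paper's own proof is silent on this point as well — it applies Theorem~\ref{thm:beukers}.\ref{item3} without addressing what happens if two components of $\TT^n\setminus\overline{\cD_f}$ map to the same lattice point — so you have put your finger on a genuine subtlety. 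But the fix you propose does not close it: the remedy would have to come from the structure of $A$ (or from a sharper version of the order-map statement in \cite{lopsided}), not from renormalizing $B$.
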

\begin{proof}
Let $\theta_1, \dots, \theta_k$ be representatives for the components of $\TT^n\setminus \overline{\cD_f}$. 
Suppose that $\ell_1, \dots, \ell_k$ are constants providing a vanishing linear combination of  $M_f^{\theta_1}(c), \dots, M_f^{\theta_k}(c)$, so that 
\[
g_B\sum_{j=1}^k \ell_j e^{i\<s,\theta_j\>}\,L^{\theta_j}(c)
= 2\pi i\,\Gamma(t)\,g_A\,\sum_{j=1}^k \ell_j\,M_f^{\theta_j} (c) = 0.
\]
It then follows from Theorem~\ref{thm:beukers}.\ref{item3} that $\ell_1 = \dots = \ell_k = 0$.
\end{proof}

In~\cite{beukers-monodromy}, it is noted that it is not always possible to construct a basis of the solution space of $H_A(\beta)$ only considering Mellin--Barnes integrals of the form \eqref{eqn:MB}. By Corollary~\ref{cor:LopsidedIntegralIndependence}, the same is true for constructing bases of solutions from 
meromorphic continuations of Euler--Mellin integrals arising only from the set of components of 
$\TT^n\setminus\overline{\cD_f}$. 
However, 
$\TT^n\setminus\overline{\cA'_f}$ has in general has more connected components than $\TT^n\setminus\overline{\cD_f}$, and in many cases it is possible to use Euler--Mellin integrals to construct a basis of solutions even though Mellin--Barnes integrals do not suffice. 

\begin{example}
Consider the matrix 
$A = \left[\begin{array}{cccc}
1 & 1 & 1 & 1\\
0 & 2 & 3 & 6\end{array}\right]$.
The $A$-discriminant is 
\begin{align*}
D_A(c) \,= & \, 1024 c_3 c_1^6-108 c_2^4 c_1^3+13824 c_0^2 c_3^2 c_1^3+8640 c_0 c_2^2
   c_3 c_1^3\\& \, -729 c_0 c_2^6+46656 c_0^4 c_3^3-34992 c_0^3 c_2^2 c_3^2+8748 c_0^2 c_2^4 c_3,
\end{align*}
whose coamoeba covers $\TT^4$. 
The maximal number of points in $\inter(\bZ_B)\cap (\Arg(c)B + 2\pi \ZZ[B])$ is five by~\cite{NP}, and hence
there is no Mellin--Barnes basis of solutions for $H_A(\beta)$ for any $\beta$. However, for a generic choice of coefficients $c$ the coamoeba of $f(z) = c_0 + c_1 z^2 +c_2 z^3 + c_3 z^6$ has six components in its complement, and for nonresonant $\beta$ these provide a Euler--Mellin basis of solutions for $H_A(\beta)$ by Proposition~\ref{prop:indep}.
\end{example}

We note that there exist matrices $A$ whose associated coamoeba has fewer than the desired $\vol(A)$-many number of connected components in the complement of its closure for all choices of coefficients $c$~\cite{jensthesis}. 
On the other hand, if $A$ is a circuit, then 
when $\beta$ is totally nonresonant, there always exists a Mellin--Barnes, and hence an extended Euler--Mellin, basis of integral representations for solutions of $H_A(\beta)$~\cite{lopsided}.

\section{$A$-hypergeometric rank-jumping example}
\label{sec:0134}
We conclude with an example first studied in \cite{0134}, 
where it was first seen that some parameters $\beta$ admit a higher-dimensional solution 
space for $H_A(\beta)$ than expected. 
We illustrate how meromorphic continuations of Euler--Mellin integrals capture these extra solutions at non-generic parameters $\beta$, 
offering a new tool to understand how these special functions arise. 

Consider the system $H_A(\beta)$ given by 
$A = \left[\begin{matrix} 1&1&1&1\\ 0&1&3&4\end{matrix}\right]$ 
and the unique 
parameter $\beta = (1,2)$ which yields a solution space whose dimension is one larger than expected. 
For this $A$, the Euler--Mellin integral is
\be\label{eq:0134 em int}
 M_f^{\Theta} (s,t,c) = \int_{\Arg^{-1}(\theta)} \frac{z^s}{(c_1 + c_2 z + c_3 z^3 + c_4 z^4)^t} \frac{d z}{z} 
\ee
for the polynomial $f(z) = c_1 + c_2 z + c_3 z^3 + c_4 z^4$ and 
$\theta\in\Theta$
for a fixed connected component $\Theta$ of 
$\TT^2\setminus \overline{\cA'_f}$. 
In order to calculate the corresponding $\Phi_f^{\Theta}$, we first expand \eqref{eq:0134 em int} 
five times in different directions, so that it converges for $(s,t) = (-2,-1)$. 
Upon expansion, $M_f^{\Theta} (s,t,c)$ is equal to 
\begin{align}\label{eq:exp 0134 em}
 \frac{(t)_2}{s}\int \frac{z^s h_1(z)}{f(z)^{t+2}} \frac{d z}{z} +
\frac{(t)_3}{s}\int \frac{z^s h_2(z)}{f(z)^{t+3}} \frac{d z}{z}
+ \frac{(t)_4}{s}\int \frac{z^s h_3(z)}{f(z)^{t+4}} \frac{d z}{z}  +
\frac{(t)_5}{s}\int \frac{z^s h_4(z)}{f(z)^{t+5}} \frac{d z}{z},
\end{align}
where all integrals are taken over $\Arg^{-1}(\theta)$ and 
$(t)_n = \Gamma(t+n)/\Gamma(t)$ is the Pochhammer symbol. 
This shows that when $(s,t) = (-2,-1)$, 
the entire function $\Phi_f^{\Theta}$ falls into the 
situation noted in Remark~\ref{rm:onePolyZeros}, 
and we thus ignore the factor $(t+1)$ in \eqref{eq:exp 0134 em}. 
To be explicit, 
{
\small 
\begin{align*}
h_1(z)  = &\ \frac{3 c_2 c_3 z^4}{s+1}+\frac{3 c_2 c_3 z^4}{s+3}+\frac{4 c_2 c_4 z^5}{s+1}+\frac{4 c_2 c_4 z^5}{s+4},\displaybreak[0]\\\
h_2(z)  = &\  \frac{36 c_1 c_3^2 z^6}{(s+3) (4t-s+2)}+\frac{48 c_1 c_3 c_4 z^7}{(s+3)(4t-s+1)}+\frac{48 c_1c_3 c_4 z^7}{(s+4) (4t-s+1)}
+\frac{64 c_1 c_4^2 z^8}{(s+4) (4 t-s)}
\\ &\quad 
+\frac{c_2^3 z^3}{(s+1) (s+2)}+\frac{3c_2^2 c_3 z^5}{(s+1) (s+2)}+\frac{4 c_2^2 c_4 z^6}{(s+1) (s+2)}
+\frac{27 c_2 c_3^2 z^7}{(s+3) (4t-s+2)} 
\\ &\quad 
+\frac{36 c_2 c_3 c_4 z^8}{(s+3) (4t-s+1)}+\frac{36 c_2 c_3 c_4 z^8}{(s+4) (4t-s+1)} 
+\frac{48c_2 c_4^2 z^9}{(s+4) (4 t-s)}
\\ &\quad 
+\frac{9 c_3^3 z^9}{(s+3) (4t-s+2)},\displaybreak[0]\\\ 
h_3(z)  = &\  \frac{48 c_1 c_3^2 c_4 z^{10}}{(s+3) (4t-s+1) (4t-s+2)}+\frac{48 c_1 c_3^2 c_4 z^{10}}{(s+4) (4t-s+1) (4t-s+12)}\\
&\quad  +\frac{64 c_1 c_3 c_4^2 z^{11}}{(s+4) (4t-s+1)^2}+\frac{36 c_2 c_3^2 c_4 z^{11}}{(s+3) (4t-s+1) (-s+4 t+2)} \\
&\quad  +\frac{36 c_2 c_3^2 c_4 z^{11}}{(s+4) (4t-s+1) (4t-s+2)}+\frac{48 c_2 c_3 c_4^2 z^{12}}{(s+4) (4 t-s) (4t-s+1)} \\
&\quad  +\frac{12 c_3^3 c_4 z^{13}}{(s+3) (4t-s+1) (4t-s+2)}+\frac{12c_3^3 c_4 z^{13}}{(s+4) (4t-s+1) (4t-s+2)},\displaybreak[0]\\\
\text{and}\quad 
h_4(z)  = &\ \frac{64 c_1 c_3^2 c_4^2 z^{14}}{(s+4) (4 t-s) (4t-s+1) (4t-s+2)} 
+\frac{48 c_2 c_3^2 c_4^2 z^{15}}{(s+4) (4 t-s) (4t-s+1) (4t-s+2)} \\
&\quad  +\frac{16 c_3^3 c_4^2 z^{17}}{s (s+4) (4 t-s) (4t-s+1) (4t-s+2)}.
\end{align*}
}
Each term in \eqref{eq:exp 0134 em} corresponds 
to a translation of the original integral \eqref{eq:0134 em int} 
and converges at $(s,t) = (-2,-1)$. 
In addition, the lack of a degree 2 term in $f$ means that no term of any 
$h_i(t)$ has both $(s+2)$ and $(4t -s +2)$ as factors in its denominator. 
Thus there are entire functions $\Phi_1$, $\Phi_2$, and $\Phi_3$  
in $s$ and $t$ such that 
\[
 \Phi_f^{\Theta} = (4t -s +2) \Phi_1 + (s+2)\Phi_2 +  (s+2)(4t -s +2)\Phi_3. 
\]
From this expression we see that while $\Phi^{\Theta}(-2,-1,c) = 0$ independently of $c$ and $\Theta$, we also obtain two functions 
$\Phi_1$ and $\Phi_2$ that are also solutions of $H_A(\beta)$. 
Explicit calculation reveals that
\[
 \Phi^\Theta_1(-2,-1,c) = 2 \,\frac{c_2^2}{c_1}
 \qquad \text{ and } \qquad 
 \Phi^\Theta_2(-2,-1,c) =  2 \,\frac{c_3^2}{c_4},
\]
for any choice of $\Theta$.  
These span the Laurent series solutions of the system $H_A(1,2)$, 
which has dimension two only at this parameter \cite{CDD}. 
The vanishing of $\Phi^\Theta$ at $\beta = (1,2)$ together with the appearance 
of $\Phi_1$ and $\Phi_2$ illustrates the first direct relationship between 
the computation of the local cohomology of the commutative ring 
$\CC[\partial_c] / \< \square_u\mid Au=0\>$ with respect to $\<\partial_c\>$ 
and the Laurent polynomial solutions of $H_A(1,2)$. 

\raggedbottom
\def\cprime{$'$} \def\cprime{$'$}
\providecommand{\MR}{\relax\ifhmode\unskip\space\fi MR }
\providecommand{\MRhref}[2]{%
  \href{http://www.ams.org/mathscinet-getitem?mr=#1}{#2}
}
\providecommand{\href}[2]{#2}

\begin{thebibliography}{GKZ89}

\bibitem[Beu11a]{beukers-monodromy}
Frits~Beukers, 
Monodromy of A-hypergeometric systems (2011).  
{\tt arXiv:1101.0493}

\bibitem[Beu11b]{beukers-irred}
Frits~Beukers, 
Irreducibility of A-hypergeometric systems, 
{\it Indag. Math.} (N.S.) \textbf{21} (2011), no. 1-2, 30--39.
	
\bibitem[CDD99]{CDD}
Eduardo~Cattani, Carlos~D'Andrea, and Alicia~Dickenstein, 
The $A$-hypergeometric system associated with a monomial curve, 
{\it Duke Math. J}. \textbf{99} (1999), no. 2, 179--207.
	
\bibitem[Erd37]{E}
Artur Erd\'elyi, Beitrag zur Theorie der konfluenten hypergeometrischen Funktionen von mehreren Ver\"anderlichen, {\it Sitzungsber.~Akad.~Wiss.~Wien} {\bf 146} (1937) 431--467.

\bibitem[For12]{jensthesis}
Jens Forsg{\aa}rd,
On Hypersurface Coamoebas and Integral Representations of Hypergeometric Functions,
\textit{Licentiate Thesis}, Department of Mathematics, Stockholm University
(2012). 

\bibitem[FJ12]{lopsided}
Jens Forsg{\aa}rd and Petter Johansson,
On the order map for hypersurface coamoebas (2012). 
{\tt arXiv:1205.2014}

\bibitem[GKZ90]{GKZc}  
Israel~Gelfand, Mikhail~Kapranov, and Andrei~Zelevinsky, 
Generalized Euler integrals and $A$-hypergeometric functions, 
\textit{Adv.~Math.} \textbf{84} (1990), 255--271.

\bibitem[GKZ94]{GKZa} 
Israel~Gelfand, Mikhail~Kapranov, and Andrei~Zelevinsky, 
\textit{Discriminants, Resultants and Multidimensional Determinants}, 
Birkh\"auser Boston, Inc., Boston, MA, 1994.

\bibitem[Joh10]{J} Petter Johansson: Coamoebas, \textit{Licentiate Thesis}, Department of Mathematics, Stockholm University (2010).

\bibitem[Nil09]{N}
Lisa~Nilsson, Amoebas, Discriminants, and Hypergeometric Functions, 
\textit{Doctoral thesis}, 
Stockholm University (2009).

\bibitem[NP10]{NP}
Lisa~Nilsson and Mikael~Passare, 
Mellin transforms of multivariate rational functions (2010). 
{\tt arXiv:1010.5060}

\bibitem[NS11]{NS} 
Mounir Nisse and Frank Sottile, 
Non-Archimedean coamoebae, 
\textit{Proceedings of the 2011 Bellairs Workshop in Number Theory: Non-Archimedean and Tropical Geometry}, to appear 
(2011). 
{\tt arXiv:1110.1033}

\bibitem[SST00]{SST}
Mutsumi~Saito, Bernd~Sturmfels, and Nobuki~Takayama, 
{Gr\"obner {D}eformations of {H}ypergeometric {D}ifferential {E}quations}, 
\textit{Springer--Verlag}, Berlin, 2000.

\bibitem[SW10]{sw-irred nonres}
Mathias~Schulze and Uli~Walther, 
Resonance equals reducibility for $A$-hypergeometric systems, \textit{Algebra Number Theory}, to appear (2010).
{\tt arXiv:1009.3569}
	
\bibitem[ST98]{0134}
Bernd~Sturmfels and Nobuki~Takayama, 
Gr\"obner bases and hypergeometric functions, 
Gr\"obner bases and applications (Linz, 1998), 246--258, 
\textit{London Math. Soc. Lecture Note Ser}. \textbf{251}, 
Cambridge Univ. Press, Cambridge, 1998.

\end{thebibliography}
\end{document}